\newtheorem{thm}{Theorem}[section]
\newtheorem{lemma}{Lemma}[section]
\newtheorem{remark}{Remark}[section]
\newtheorem{proposition}{Proposition}[section]
\def \eps{\varepsilon}
\def \R{{\Bbb R}}
\numberwithin{equation}{section}
\begin{document}

\title[Stabilization of regular solutions for the ZK equation]
{
Stabilization of regular solutions for the Zakharov-Kuznetsov equation posed on bounded rectangles and on a strip
}
\author[
G.~G. Doronin,
\ N.~A. Larkin]
{
G.~G. Doronin,
\ N.~A. Larkin
\bigskip
\\
{\tiny
Departamento de Matem\'atica,\\
Universidade Estadual de Maring\'a,\\
87020-900, Maring\'a - PR, Brazil.
}
}
\address
{
Departamento de Matem\'atica\\
Universidade Estadual de Maring\'a\\
87020-900, Maring\'a - PR, Brazil.
}
\email{ggdoronin@uem.br \ \  nlarkine@uem.br}
\date{}

\subjclass
{35M20, 35Q72}
\keywords
{ZK equation, stabilization}

\begin{abstract}
Initial-boundary value problems for the 2D Zakharov-Kuznetsov equation posed on bounded rectangles and on a strip are considered.
Spectral properties of a linearized operator and critical sizes of domains are studied.
Exponential decay of regular solutions for the original nonlinear problems is proved.
\end{abstract}

\maketitle

\section{Introduction}\label{introduction}

We are concerned with initial-boundary value problems (IBVPs) posed
on bounded rectangles and on a strip located at the right half-plane
$\{(x,y)\in\mathbb{R}^2:\ x>0\}$  for the Zakharov-Kuznetsov (ZK) equation
\begin{equation}
u_t+(\alpha +u)u_x +u_{xxx}+u_{xyy}=0,\label{zk}
\end{equation}
where $\alpha $ is  equal  to $1$ or to $0,$ and which is a two-dimensional
analog of the well-known Korteweg-de Vries (KdV) equation
\begin{equation}\label{kdv}
u_t+uu_x+u_{xxx}=0
\end{equation}
with clear plasma physics applications \cite{zk}.

Equations \eqref{zk} and \eqref{kdv} are typical examples
of so-called
dispersive equations which attract considerable attention
of both pure and applied mathematicians in the past decades. The KdV
equation is probably more studied in this context.
The theory of the initial-value problem
(IVP henceforth)
for \eqref{kdv} is considerably advanced today
\cite{bona2,bourgain2,tao,kato,ponce2,kru,saut2,temam1}.

Recently, due to physics and numerics needs, publications on initial-boundary value
problems in both bounded and unbounded domains for dispersive equations have been appeared
\cite{bona1,bona3,bubnov,colin,larkin,lar2,rivas,zhang}. In
particular, it has been discovered that the KdV equation posed on a
bounded interval possesses an implicit internal dissipation. This allowed
to prove the exponential decay rate of small solutions for
\eqref{kdv} posed on bounded intervals without adding any
artificial damping term \cite{larkin}. Similar results were proved
for a wide class of dispersive equations of any odd order with one
space variable \cite{familark}.

However, \eqref{kdv} is a satisfactory approximation for real waves phenomena while the
equation is posed on the whole line ($x\in\mathbb{R}$); if
cutting-off domains are taken into account, \eqref{kdv} is no longer
expected to mirror an accurate rendition of reality. The correct
equation in this case should be written \cite{bona1,zhang}
\begin{equation}\label{1.3}
u_t+ u_x+uu_x+u_{xxx}=0.
\end{equation}
Indeed, if $x\in\R,\ t>0$,
the linear traveling term $u_x$ in \eqref{1.3} can be easily scaled
out by a simple change of variables; but it can
not be safely ignored for problems posed on both finite and semi-infinite intervals without
changes in the original domain.

Once bounded domains are considered
as a spatial region of waves propagation, their sizes appear to be
restricted by certain critical conditions. An important result
regarding these conditions is the explicit description
of a spectrum-related countable critical set
$$
\mathcal{N}=\frac{2\pi}{\sqrt3}\sqrt{k^2+kl+l^2}\,;\ \ \
k,l\in\mathbb{N}.
$$
While studying the controllability and stabilization of solutions
for \eqref{1.3}, the set $\mathcal{N}$ provides qualitative
difficulties when the length of a spatial interval coincides with
some of its elements. In fact, the function
$$
u(x)=1-\cos x
$$
is a stationary (not decaying) solution for linearized \eqref{1.3}
posed on $(0,2\pi),$ and $2\pi\in\mathcal{N}.$

It has been shown in \cite{rosier}
that control of the linear KdV equation with the term $u_x$ may fail
for critical lengths. It means that there is no
decay of solutions for a countable set of critical domains; hence, there is
no decay in a quarter-plane, at least without inclusion into
equation of some additional internal damping \cite{lipa,zuazua}. We recall, however, that
if the transport term $u_x$ is neglected, then \eqref{1.3} becomes \eqref{kdv}, and it is possible to prove the
exponential decay rate of small solutions for \eqref{kdv} posed
on any bounded interval.
More recent results on control and stabilizability for the KdV equation can
be found in \cite{rosier1,rozan}.

Quite recently, the interest on dispersive equations became to be
extended to multi-dimensional models such as Kadomtsev-Petviashvili (KP)
and ZK equations.
As far as the ZK equation is concerned,
the results on both IVP and IBVP can be found in
\cite{faminski,faminski2,pastor,pastor2,saut}. Our work has been inspired by \cite{temam} where \eqref{zk}
posed on a strip bounded in $x$ variable was considered with. Studying this paper, we
have found that the term $u_{xyy}$ in \eqref{zk} delivers additional dissipation which may ensure
decay of small solutions.
For instance,
the term $u_{xyy}$ provides the exponential decay of small solutions in
a channel-type domain; namely, in a half-strip unbounded in $x$
direction \cite{larkintronco}. However, there are restrictions on a
width of a channel in the case $\alpha=1,$ and no restrictions are needed if
$\alpha=0.$ The following questions arise naturally:
\begin{itemize}
\item
Whether width limitations for these strip-like domains are somewhat
technical?
\item Are there some critical rectangles or strips in which  solutions do not decay likewise in the KdV case?
\end{itemize}

In the present paper we put forward the hypotheses that there are critical restrictions upon the size of both
bounded and unbounded domains.
Indeed, the function
\begin{equation*}\label{1.4}
u(x,y)=\cos \left(\frac{y}{2}\right)\left(1-\cos\left(\frac{x\sqrt{3}}{2}\right)\right)
\end{equation*}
solves linearized \eqref{zk} with $\alpha=1,$ i.e., the equation
$$u_t+u_x+u_{xxx}+u_{xyy}=0,$$
considered on rectangle
$$(x,y)\in(0,4\pi/\sqrt{3})\times (-\pi,\pi),$$
and clearly it does not decay as $t\to\infty.$ This example gives raise
to expect that exact conditions (like $\mathcal{N}$ for
\eqref{1.3}) can be elaborated to describe the critical size of
domains in which the decay of solutions fails, at least for linear
models.

The main goal of our paper is to establish the existence and
uniqueness of global-in-time regular solutions of \eqref{zk} posed
both on bounded rectangles and on a strip, and the exponential decay
rate of these solutions for sufficiently small initial data.

The paper has the following structure. Section 1 is Introduction.
Section 2 contains formulation of the problem and auxiliaries.
In Section \ref{existence}, a parabolic
regularization is used to prove the existence theorem in rectangles. Uniqueness is
proved in Section 4. Existence of a
unique regular solution on a strip is established in Section 5. In Section 6, we provide
spectral arguments motivating our principal stabilization results to
be obtained in Section 7. Concerning the nonlinear ZK equation, the linear spectral
arguments seem to be technically
more difficult to apply for stabilizability than in 1D case.
Because of this, weight estimates are used in Section 7 to
prove decay of solutions instead of probably more modern unique
continuation methods \cite{coron2}.

\section{Problem and preliminaries}\label{problem}

Let $L,B,T$ be finite positive numbers. Define
\begin{equation*}
\mathcal{D}=\{(x,y)\in\mathbb{R}^2: \ x\in(0,L),\ y\in(-B,B) \},\ \ \ \mathcal{Q}_T=\mathcal{D}\times (0,T).
\end{equation*}

For $\alpha=1$ or $\alpha =0$ we consider the following IBVP:
\begin{align}
A_{\alpha}&u\equiv u_t+(\alpha +u)u_x+u_{xxx}+u_{xyy}=0,\ \ \text{in}\ \mathcal{Q}_T;
\label{2.1}
\\
&u(x,-B,t)=u(x,B,t)=0,\ \ x\in(0,L),\ t>0;
\label{2.2}
\\
&u(0,y,t)=u(L,y,t)=u_x(L,y,t)=0,\ \ y\in(-B,B),\ t>0;
\label{2.3}
\\
&u(x,y,0)=u_0(x,y),\ \ (x,y)\in\mathcal{D},
\label{2.4}
\end{align}
where $u_0:\mathcal{D}\to\mathbb{R}$ is a given function.

Hereafter subscripts $u_x,\ u_{xy},$ etc. denote the partial derivatives,
as well as $\partial_x$ or $\partial_{xy}^2$ when it is convenient.
Operators $\nabla$ and $\Delta$ are the gradient and Laplacian acting over $\mathcal{D}.$
By $(\cdot,\cdot)$ and $\|\cdot\|$ we denote the inner product and the norm in $L^2(\mathcal{D}),$
and $\|\cdot\|_{H^k}$ stands for the norm in $L^2$-based Sobolev spaces.

We will need the following result \cite{lady}.
\begin{lemma}\label{lemma1}
Let $u\in H^1(\mathcal{D})$ and $\gamma$ be the boundary of $\mathcal{D}.$

If $u|_{\gamma}=0,$ then
\begin{equation}\label{2.5}
\|u\|_{L^q(\mathcal{D})}\le \beta\|\nabla u\|^{\theta}\|u\|^{1-\theta},
\end{equation}
where $q=3$ or $q=4,$ $\theta=2\left(\frac12-\frac1q\right)$ and $\beta=2^{\theta}.$

If $u|_{\gamma}\ne0,$ then
\begin{equation}\label{2.6}
\|u\|_{L^q(\mathcal{D})}\le C_{\mathcal{D}}\|u\|^{\theta}_{H^1(\mathcal{D})}\|u\|^{1-\theta},
\end{equation}
where $C_{\mathcal{D}}$ does not depend on a size of $\mathcal{D}.$
\end{lemma}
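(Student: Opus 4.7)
The plan is to follow the classical Ladyzhenskaya slicing argument, separating the two cases according to the boundary behaviour.

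For part 1, with $u|_{\gamma}=0$, I would use the one-dimensional fundamental theorem of calculus along axis-parallel slices. Since $u(0,y)=u(L,y)=0$ and $u(x,\pm B)=0$, integration from the boundary gives
$$
u^2(x,y) = 2\int_0^x u\,u_{x'}(x',y)\,dx' \le 2\int_0^L |u\,u_x|(x',y)\,dx',
$$
and analogously $u^2(x,y)\le 2\int_{-B}^{B}|u\,u_y|(x,y')\,dy'$. Multiplying these two pointwise bounds yields
$$
u^4(x,y) \le 4\Bigl(\int_0^L |u\,u_x|\,dx'\Bigr)\Bigl(\int_{-B}^{B}|u\,u_y|\,dy'\Bigr),
$$
and integrating over $\mathcal{D}$ turns the right-hand side into the product of two double integrals. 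Cauchy--Schwarz applied to each factor gives $\int_{\mathcal{D}} u^4 \le 4\|u\|^2\|u_x\|\|u_y\| \le 4\|u\|^2\|\nabla u\|^2$, which is exactly \eqref{2.5} for $q=4$ with $\beta = 2^{1/2}$. The case $q=3$ then follows by Hölder interpolation $\|u\|_{L^3}\le \|u\|^{1/3}\|u\|_{L^4}^{2/3}$ and substituting the $L^4$ bound just obtained; the exponents match $\theta=1/3$ and the constant becomes $2^{1/3}$, as claimed.

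For part 2, with $u|_{\gamma}\ne 0$, the previous slicing fails because the boundary terms no longer vanish. My plan is to reduce to the previous case by an extension argument: reflect $u$ successively (evenly) across each of the four edges of the rectangle to produce a function $\tilde u\in H^1$ on the doubled rectangle $(-L,L)\times(-2B,2B)$ (and, if needed, on all of $\mathbb{R}^2$ via further reflection and a fixed cut-off near a dilated copy). Because reflection is an isometry of both $L^q$ and $H^1$ norms up to a purely combinatorial factor, one has $\|\tilde u\|_{L^q}\le c\|u\|_{L^q(\mathcal{D})}$ and $\|\tilde u\|_{H^1}\le c'\|u\|_{H^1(\mathcal{D})}$ with constants $c,c'$ depending only on how many reflections are performed, \emph{not} on $L$ or $B$. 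Then the standard Gagliardo--Nirenberg inequality on $\mathbb{R}^2$ (or on the doubled domain for functions vanishing on its boundary, which puts us back in the setting of part~1) applied to $\tilde u$ gives \eqref{2.6}.

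The main obstacle I foresee is precisely keeping track of $\mathcal{D}$-independence of $C_{\mathcal{D}}$ in part~2: any naive use of a global Sobolev embedding on $\mathcal{D}$ would bring in the shape/size of $\mathcal{D}$ through the embedding constant, and a direct scaling to a unit rectangle would spoil the homogeneity since $\|u\|_{H^1}$ mixes two different powers of length. The reflection trick is what resolves this, because it replaces $\mathcal{D}$-dependent bounds by an inequality on a fixed flat space where the constant is universal, while the $H^1$ norm on the right-hand side of \eqref{2.6} is exactly what absorbs the non-zero boundary contribution that obstructed the $\|\nabla u\|$-only estimate of part~1.
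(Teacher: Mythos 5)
The paper itself offers no proof of this lemma: it is quoted from Ladyzhenskaya's book \cite{lady}, so there is no in-paper argument to compare with, and your proposal must stand on its own. Your first part does: the slicing bounds $u^2(x,y)\le 2\int_0^L|uu_x|\,dx'$ and $u^2(x,y)\le 2\int_{-B}^{B}|uu_y|\,dy'$, multiplied, integrated over $\mathcal{D}$ and estimated by Cauchy--Schwarz, give $\|u\|^4_{L^4(\mathcal{D})}\le 4\|u\|^2\|u_x\|\,\|u_y\|\le 4\|u\|^2\|\nabla u\|^2$, i.e.\ $\beta=2^{1/2}$ for $q=4$, and the interpolation $\|u\|_{L^3}\le\|u\|^{1/3}\|u\|_{L^4}^{2/3}$ yields $\theta=1/3$, $\beta=2^{1/3}$ for $q=3$. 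This is the standard proof of \eqref{2.5} and it is correct.

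The second part has a genuine gap. Even reflection across the four edges does produce an $H^1$ function on the doubled rectangle with norms controlled by those of $u$, but that reflected function does \emph{not} vanish on the boundary of the doubled rectangle, so part~1 cannot be applied to it directly; the cut-off is therefore not an optional refinement but the crux of the argument, and it is precisely where size-dependence enters. A cut-off $\chi$ equal to $1$ on $\mathcal{D}$ and supported in a fixed dilate of $\mathcal{D}$ has $\|\nabla\chi\|_{L^{\infty}}\sim 1/\min(L,B)$, so $\|\nabla(\chi\tilde u)\|\le\|\nabla\tilde u\|+C\,\min(L,B)^{-1}\|\tilde u\|$, which is not a universal multiple of $\|u\|_{H^1(\mathcal{D})}$ when the rectangle is small. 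This is not a removable technicality: testing \eqref{2.6} with $u\equiv 1$ and $q=4$ gives $\|u\|_{L^4}=(2LB)^{1/4}$ against $C_{\mathcal{D}}(2LB)^{1/2}$ on the right, so no constant independent of the size can work as $LB\to 0$; the claim must be read as uniformity over domains whose dimensions are bounded below (in the paper only uniformity in $B$ as $B\to\infty$ at fixed $L$ is ever used). A route that avoids extension altogether, and makes the actual dependence explicit, is the mean-value form of the slicing argument: averaging $u^2(x,y)=u^2(x,y_0)+\int_{y_0}^{y}\partial_s\bigl(u^2\bigr)\,ds$ over $y_0\in(-B,B)$ gives $u^2(x,y)\le\frac{1}{2B}\int_{-B}^{B}u^2\,dy'+2\int_{-B}^{B}|uu_y|\,dy'$, and similarly in $x$; multiplying and integrating then yields \eqref{2.6} with the full $H^1$ norm on the right and a constant depending only on lower bounds for $L$ and $B$.
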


\section{Existence theorem}\label{existence}

In this section we state the existence result for a bounded domain.
\begin{thm}\label{theorem1}
Let $\alpha=1$ and $u_0$ be a given function such that $u_0|_{\gamma}=u_{0x}|_{x=L}=0$ and
$$
I_0\equiv \|u_0\|^2_{H^1_0(\mathcal{D})}+\|\partial^2_yu_0\|^2+\|u_0u_{0x}+\Delta u_{0x}\|^2<\infty.
$$
Then for all finite positive $B,\ L,\ T$ there exists a unique regular solution to
\eqref{2.1}-\eqref{2.4} such that
\begin{align*}
&u\in L^{\infty}(0,T;H^2(\mathcal{D}))\cap L^2(0,T;H^3(\mathcal{D}));\\
&\Delta u_x\in L^{\infty}(0,T;L^2(\mathcal{D}))\cap L^2(0,T;H^1(\mathcal{D}));\\
&u_t\in L^{\infty}(0,T;L^2(\mathcal{D}))\cap L^2(0,T;H^1(\mathcal{D}))
\end{align*}
and
\begin{align}\label{33.20}
&\|u\|_{H^2(\mathcal{D})}^2(t)+\|\Delta u_x\|^2(t)+\|u_t\|^2(t)+\|u_x(0,y,t)\|^2_{H^1_0(-B,B)}\notag\\
&+\int_0^T\left\{\|u\|^2_{H^3(\mathcal{D})}(t)+\|\Delta u_x\|_{H^1(\mathcal{D})}^2(t)+\|u_x(0,y,t)\|^2_{H^2(-B,B)}\right\}\,dt\notag\\
&\le CI_0,
\end{align}
where the constant $C$ depends on $L,\ \|u_0\|$ and $T,$ but does not depend on $B>0.$
\end{thm}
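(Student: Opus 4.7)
The plan is to combine a parabolic regularization with the Galerkin method to produce approximate solutions, extract from them a priori estimates uniform both in the regularization parameter and in $B$, and then pass to the limit.

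First, I would replace $A_1 u = 0$ by the regularized equation
\begin{equation*}
u^{\eps}_t + (1+u^{\eps})u^{\eps}_x + u^{\eps}_{xxx} + u^{\eps}_{xyy} + \eps \Delta^2 u^{\eps} = 0,
\end{equation*}
augmented with \eqref{2.2}--\eqref{2.4} and with the additional fourth-order boundary conditions (e.g. $\Delta u^{\eps}=0$ on $\gamma$) needed for well-posedness of the perturbed problem. For fixed $\eps>0$ this is a nonlinear parabolic equation, and I would construct approximate solutions by projecting on a basis of eigenfunctions of $\Delta^2$ on $\mathcal{D}$ subject to these boundary conditions, reducing to an ODE system with solutions existing locally; global existence then follows from the $L^2$ estimate below.

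Next, derive a hierarchy of a priori estimates. Multiplying the regularized equation by $u^{\eps}$ and integrating by parts exploits the antisymmetry of $\partial_x(\partial_x^2 + \partial_y^2)$ together with $u^{\eps}|_{x=0,L}=0$ and $u^{\eps}_x|_{x=L}=0$ to reveal the hidden dissipation
\begin{equation*}
\fr^{\eps} + \tfrac12\|u^{\eps}_x(0,y,t)\|^2_{L^2(-B,B)} + \eps\|\Delta u^{\eps}\|^2 = 0,
\end{equation*}
with the nonlinear term vanishing since $u^{\eps}=0$ on $x=0,L$. Then I would test against $-\Delta u^{\eps}_x$, against $u^{\eps}_t$, and finally against $\Delta u^{\eps}_{xt}$ (or differentiate in $t$ and repeat the $L^2$ argument on $u^{\eps}_t$). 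Each step recovers one further term in \eqref{33.20}: first the $H^1$ norm and $\|u^{\eps}_x(0,y,t)\|^2_{H^1_0}$, then $\|u^{\eps}_t\|^2+\|\Delta u^{\eps}_x\|^2$ together with $\|u^{\eps}_x(0,y,t)\|^2_{H^2}$, and finally full $H^2$ control of $u^{\eps}$ and $H^3$ regularity in time average. At each level the nonlinear term $(1+u^{\eps})u^{\eps}_x$ is absorbed using Lemma \ref{lemma1}: where $u^{\eps}$ vanishes on the full boundary, estimate \eqref{2.5} is used; where derivatives of $u^{\eps}$ appear (not vanishing on all of $\gamma$), estimate \eqref{2.6} is invoked precisely because its constant $C_{\mathcal{D}}$ is independent of $B$.

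Finally, the uniform bounds allow passage to the limit $\eps\to 0$: weak-$*$ convergence in $L^\infty(0,T;H^2(\mathcal{D}))$, weak convergence in $L^2(0,T;H^3(\mathcal{D}))$, and by Aubin-Lions strong convergence in $L^2(0,T;H^{2-\delta})$, which is sufficient to identify the weak limit of $(1+u^{\eps})u^{\eps}_x$ with $(1+u)u_x$; lower semicontinuity of norms then yields \eqref{33.20}. Uniqueness is deferred to Section 4 as announced.

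The principal obstacle is keeping all constants independent of $B$. Standard Gagliardo-Nirenberg constants on $\mathcal{D}$ deteriorate as $B\to\infty$, so the whole argument hinges on threading every use of a Sobolev embedding through Lemma \ref{lemma1}, specifically the $B$-independent form \eqref{2.6}. The most delicate moment is the top-level estimate controlling $\Delta u_x$ and $u_t$, where the nonlinearity is tested against a high-order multiplier and must be bounded by a small constant times the dissipation plus a term involving only lower norms, all without implicit dependence on the vertical width.
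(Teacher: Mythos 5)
Your overall architecture --- parabolic regularization, a priori estimates uniform in $\eps$ and $B$, passage to the limit --- matches the paper's, and you correctly identify that $B$-independence must be threaded through the $B$-independent interpolation inequality \eqref{2.6}. However, two of your concrete steps would not go through. The unweighted $L^2$ identity you write down yields only the trace term $\int_{-B}^B (u^{\eps}_x)^2(0,y,t)\,dy$ as dissipation; it gives no control of $\int_0^T\|\nabla u\|^2\,dt$, which is indispensable both for the $L^2(0,T;H^1)$ part of \eqref{33.20} and, more importantly, as the integrable coefficient in the Gronwall arguments that close every higher-order estimate against the nonlinearity. The paper's engine is the Kato-type weighted multiplier $(1+x)u$ (followed by $(1+x)\partial_y^2u$, $(1+x)\partial_y^4u$ and $(1+x)u_t$), whose commutator with $\partial_x\Delta$ produces the interior smoothing $3\|u_x\|^2+\|u_y\|^2$ on the left-hand side. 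Without this weight your hierarchy has no source of spatial dissipation with which to absorb the nonlinear terms.

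Second, the multipliers $-\Delta u^{\eps}_x$ and $\Delta u^{\eps}_{xt}$ do not produce energy identities: the pairing $(u_t,-\Delta u_x)$ is not the time derivative of any norm, and these pairings generate boundary integrals at $x=0$ involving $u_{xx}(0,y,t)$ that are neither sign-definite nor controlled by the boundary conditions (only $u$, not $u_x$, vanishes at $x=0$). The paper deliberately avoids any direct energy estimate on second or third $x$-derivatives: it estimates only the $y$-derivatives $u_y$, $u_{yy}$ and the time derivative $u_t$ by weighted energy methods, and then recovers $\Delta u_x\in L^\infty(0,T;L^2)$ and $u\in L^\infty(0,T;H^2)\cap L^2(0,T;H^3)$ a posteriori by reading the equation as the elliptic problem $\Delta u_x=-u_t-(1+u)u_x$ with boundary datum $u_x(0,y,t)$ (already controlled via the trace estimates at $x=0$) and invoking elliptic regularity. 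That detour is the essential idea missing from your plan at precisely the point you yourself flag as the most delicate.
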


To prove this theorem we consider for all real $\eps>0$
the following parabolic regularization of \eqref{2.1}-\eqref{2.4}:
\begin{align}
A^{\eps}u_{\eps}&\equiv A_1u_{\eps}+\eps(\partial_x^4u_{\eps}+\partial_y^4u_{\eps})=0\ \ \text{in}\ \mathcal{Q}_T;\label{3.1}\\
&u_{\eps}(x,-B,t)=u_{\eps}(x,B,t) \notag\\
&=\partial_y^2u_{\eps}(x,-B,t)=\partial^2_yu_{\eps}(x,B,t)=0,\ x\in(0,L),\ t>0;\label{3.2}\\
&u_{\eps}(0,y,t)=u_{\eps}(L,y,t)\notag\\
&=\partial_x^2u_{\eps}(0,y,t)=\partial_xu_{\eps}(L,y,t)=0,\ y\in (-B,B),\ t>0;\label{3.3}\\
&u_{\eps}(x,y,0)=u_{0}(x,y),\ (x,y)\in \mathcal{D}.\label{3.4}
\end{align}

For all $\eps>0,$ \eqref{3.1}-\eqref{3.4} admits, at least for small $T>0$, a unique
regular solution in $\mathcal{Q}_T$ \cite{lady2}. We assume here $u_0$ to be sufficiently smooth function
satisfying necessary compatibility conditions. Exact restrictions on $u_0$ will follow from a priori
estimates uniform in $\eps>0.$ These estimates justify passage to the limit as $\eps\to 0$ that proves the existence
part of Theorem \ref{theorem1}. Uniqueness will be studied in the sequel.

In the following subsections we are going to obtain a priori estimates independent of $\eps>0$ and $B>0.$
The subscript $\eps$ will be omitted whenever it is unambiguous.

\subsection{Estimate I}\label{1-st estimate}
Multiply \eqref{3.1} by $u_{\eps}$ and integrate over $\mathcal{D}$ to obtain
\begin{align}\label{3.5}
\|u_{\eps}\|^2(t)&+2\eps\int_0^t\left(\|\partial^2_xu_{\eps}\|^2(\tau)+\|\partial^2_yu_{\eps}\|^2(\tau)\right)\,d\tau\notag\\
&+\int_0^t\int_{-B}^Bu_{\eps x}^2(0,y,\tau)\,dy\,d\tau=\|u_0\|^2,\ \ t\in(0,T).
\end{align}

\subsection{Estimate II}\label{2-nd estimate}
Write the inner product
$$2\left(A^{\eps}u_{\eps},(1+x)u_{\eps}\right)(t)=0
$$
as
\begin{align*}
\frac{d}{dt}\left((1+x),u^2\right)(t)
&+(1-2\eps)\int_{-B}^Bu_x^2(0,y,t)\,dy\\
&+3\|u_x\|^2(t)+\|u_y\|^2(t)\\
&+2\eps\left[\left((1+x),u_{xx}^2\right)(t)+\left((1+x),u_{yy}^2\right)(t)\right]\\
&=\|u\|^2(t)+\frac23\int_{\mathcal{D}}u^3\,dx\,dy.
\end{align*}
Making use of \eqref{2.5}, we compute
\begin{align*}
\frac23\int_{\mathcal{D}}u^3\,dx\,dy
&\le \frac23\|u\|^3_{L^3(\mathcal{D})}(t)\le\frac23\left[2^{1/3}\|\nabla u\|^{1/3}(t)\|u\|^{2/3}(t)\right]^3\\
&=\frac43\|\nabla u\|(t)\|u\|^2(t)\le \delta\|\nabla u\|^2(t)+\frac{4}{9\delta}\|u\|^4(t).
\end{align*}
Taking $\eps\in(0,1/4)$ and $\delta=1/2,$ we get
\begin{align*}
\frac{d}{dt}\left((1+x),u^2\right)(t)
&+\frac12\|\nabla u\|^2(t)+\frac12\int_{-B}^Bu_x^2(0,y,t)\,dy\\
&+\eps\left(\|u_{xx}\|^2(t)+\|u_{yy}\|^2(t)\right)\le\|u\|^2(t)+\frac89\|u\|^4(t).
\end{align*}
Integration over $(0,t)$ and \eqref{3.5} then imply
\begin{align}\label{3.6}
&\left((1+x),u^2_{\eps}\right)(t)+\int_0^t\int_{-B}^Bu_{\eps x}^2(0,y,\tau)\,dy\,d\tau\notag\\
&+\int_0^t\|\nabla u_{\eps}\|^2(\tau)\,d\tau+\eps\int_0^t\left[u^2_{\eps xx}(\tau)+u^2_{\eps yy}(\tau)\right]\,d\tau\notag\\
&\le C\left((1+x),u_0^2\right),
\end{align}
where the constant $C$ does not depend on $B,\eps>0.$

\subsection{Estimate III}\label{3-d estimate}
Transforming the inner product
$$
-2\left((1+x)\partial^2_yu_{\eps},A^{\eps}u_{\eps}\right)(t)=0
$$
into the equality
\begin{align}\label{3.7}
\frac{d}{dt}
&\left((1+x),u^2_y\right)(t)
+3\|u_{xy}\|^2(t)
+(1-2\eps)\int_{-B}^{B}u_{xy}^2(0,y,t)\,dy\notag\\
&+\|u_{yy}\|^2(t)+2\eps\Bigl[\left((1+x),|\partial_y^2u_x|^2\right)(t)+\left((1+x),|\partial^3_yu|^2\right)(t)\Bigr]\notag\\
&=\|u_y\|^2(t)-2\left((1+x)uu_x,u_{yy}\right)(t),
\end{align}
we estimate
\begin{align*}
I&\equiv 2\left((1+x)uu_x,\partial^2_yu\right)(t)
=-2\left((1+x)(uu_y)_x,u_y\right)(t)\\
&=(u,u_y^2)(t)-\left((1+x)u_x,u^2_{y}\right)(t)\\
&\equiv I_1+I_2.
\end{align*}
Since ${u_y}_{\bigl|_{y=-B,B}\bigr.}\ne 0,$ we use \eqref{2.6} to estimate
\begin{align*}
I_1
&\le \|u\|(t)\|u_y\|^2_{L^4(\mathcal{D})}\le C_\mathcal{D}\|u\|(t)\|u_y\|(t)\|u_y\|_{H^1(\mathcal{D})}(t)\\
&\le \delta \|u_y\|^2_{H^1(\mathcal{D})}(t)+\frac{C^2_{\mathcal{D}}}{4\delta}\|u\|^2(t)\|u_y\|^2(t),\ \ \delta >0,
\end{align*}
and
\begin{align*}
I_2
&\le (1+L)C_{\mathcal{D}}\|u_{x}\|(t)\|u_y\|(t)\|u_y\|_{H^1(\mathcal{D})}(t)\\
&\le \delta\|u_y\|^{2}_{H^1(\mathcal{D})}(t)+\frac{1}{4\delta}(1+L)^2C^2_{\mathcal{D}}\|\nabla u\|^2(t)\left((1+x),u_y^2\right)(t).
\end{align*}
Estimates of $I_1,\ I_2$ and \eqref{3.6} give
$$
I\le 2\delta\|\nabla u_y\|^2(t)+\frac{C(L)}{\delta}\left(1+\|\nabla u\|^2(t)\right)\left((1+x),u_y^2\right)(t).
$$
Setting $\eps\in(0,1/4)$ and $\delta=1/4,$ \eqref{3.7} becomes
\begin{align}\label{3.8}
\frac{d}{dt}\left((1+x),u_y^2\right)(t)
&+\frac12\|\nabla u_y\|^2(t)+\frac12\int_{-B}^Bu_{xy}^2(0,y,t)\,dy\notag\\
&+2\eps\left(\|\partial^2_yu_x\|^2(t)+\|\partial^3_yu\|^2(t)\right)\notag\\
&\le C(L)\left(1+\|\nabla u\|^2(t)\right)\left((1+x),u_y^2\right)(t).
\end{align}
Hence, by the Gronwall lemma,
$$
\left((1+x),u_y^2\right)(t)\le CI_0,
$$
and, finally,
\begin{align}\label{3.9}
\|\partial_yu_{\eps}\|^2(t)
&+\int_0^t\|\nabla (\partial_yu_{\eps})\|^2(\tau)\,d\tau+\int_0^t\int_{-B}^B(\partial^2_{xy}u_{\eps})^2(0,y,\tau)\,dy\,d\tau\notag\\
&+\eps\int_0^t\left(\|\partial^2_y\partial_xu_{\eps}\|^2(\tau)+\|\partial^3_yu_{\eps}\|^2(\tau)\right)\,d\tau\notag\\
&\le C(L)I_0,
\end{align}
where the constant $C(L)$ depends neither on $\eps>0,$ nor on $B>0.$

\bigskip

To obtain the next estimate, we need the following simple result.
\begin{proposition}\label{prop1}
Let $u\in H^1(\mathcal{D})$ and $u_{xy}\in L^2(\mathcal{D}).$ Then
$$
\sup_{(x,y)\in\mathcal{D}}u^2(x,y,t)\le \|u\|^2_{H^1(\mathcal{D})}(t)+\|u_{xy}\|^2_{L^2(\mathcal{D})}(t).
$$
\end{proposition}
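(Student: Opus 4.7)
The plan is to exploit the homogeneous Dirichlet data inherited from \eqref{2.2}--\eqref{2.3}: the function $u$ to which the proposition will be applied satisfies $u(0,y,t)=0$ for $y\in(-B,B)$ and $u(x,\pm B,t)=0$ for $x\in(0,L)$, so $u$ vanishes on two adjacent sides of $\partial\mathcal{D}$ meeting at the corner $(0,-B)$. With this in hand the result is a short computation via the fundamental theorem of calculus.

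Starting from that corner, I would apply the FTC in each variable successively to $u^{2}$. Since $u^{2}(0,y)=u^{2}(x,-B)=0$ identically, the repeated FTC collapses to the mixed-derivative identity
\[
u^{2}(x,y,t)=\int_{0}^{x}\!\!\int_{-B}^{y}\partial_{s}\partial_{t}\bigl(u^{2}(s,t)\bigr)\,dt\,ds=2\!\int_{0}^{x}\!\!\int_{-B}^{y}\bigl(u\,u_{st}+u_{s}u_{t}\bigr)(s,t)\,dt\,ds
\]
for every $(x,y)\in\mathcal{D}$. Enlarging the inner domain of integration to all of $\mathcal{D}$ (permissible after taking absolute values) and applying the elementary pointwise bound $2|ab|\le a^{2}+b^{2}$ to both summands gives
\[
u^{2}(x,y,t)\le\iint_{\mathcal{D}}\bigl(u^{2}+u_{x}^{2}+u_{y}^{2}+u_{xy}^{2}\bigr)(s,t)\,ds\,dt=\|u\|^{2}_{H^{1}(\mathcal{D})}(t)+\|u_{xy}\|^{2}_{L^{2}(\mathcal{D})}(t),
\]
and taking the supremum on the left finishes the argument.

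There is no substantive obstacle here; the proposition is elementary and its sharp constant equal to one is precisely an artefact of the vanishing of $u$ on two adjacent sides of $\partial\mathcal{D}$. For general $u\in H^{1}(\mathcal{D})$ with $u_{xy}\in L^{2}(\mathcal{D})$ in the stated class, a routine density approximation by smooth functions respecting those boundary conditions transfers the pointwise identity above to the limit without loss.
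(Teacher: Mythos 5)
Your proof is correct and follows essentially the same route as the paper: both arguments integrate derivatives of $u^2$ starting from a corner of $\mathcal{D}$ where the boundary data make $u$ vanish and then apply Young's inequality, the only difference being that the paper performs the two one-dimensional applications of the fundamental theorem of calculus sequentially (via an auxiliary function $\rho^2(x,t)=\int_{-B}^{B}(u^2+u_y^2)\,dy$) while you combine them into a single mixed-derivative double integral. You are also right to flag explicitly that the vanishing of $u$ on two adjacent sides of $\partial\mathcal{D}$ is actually used --- the paper's statement leaves this hypothesis implicit, but its own proof relies on it in exactly the same way.
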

\begin{proof}
For a fixed $x\in (0,L)$ and for any $y\in (-B,B),$ it holds
$$
u^2(x,y,t)=\int_{-B}^y\partial_su^2(x,s,t)\,ds\le \int_{-B}^Bu^2(x,y,t)\,dy+\int_{-B}^Bu_y^2(x,y,t)\,dy$$
$$\equiv\rho^2(x,t).
$$
On the other hand,
$$
\sup_{(x,y)\in\mathcal{D}}u^2\le \sup_{x\in(0,L)}\rho^2(x,t)=\sup_{x\in (0,L)}\left|\int_0^x\partial_s\rho^2(s,t)\,ds\right|$$$$
\le\int_0^L\int_{-B}^B\left(u^2+u_x^2+u_y^2+u_{xy}^2\right)\,dx\,dy.
$$
The proof is complete.
\end{proof}

\subsection{Estimate IV}\label{4-th estimate}
Write
$$
2\left((1+x)\partial^4_yu_{\eps},A^{\eps}u_{\eps}\right)(t)=0
$$
in the form
\begin{align}\label{3.10}
\frac{d}{dt}\left((1+x),u_{yy}^2\right)(t)
&+(1-2\eps)\int_{-B}^Bu_{xyy}^2(0,y,t)\,dy\notag\\
&+3\|\partial^2_yu_x\|^2(t)+\|\partial^3_yu\|^2(t)\notag\\
&+2\eps\left[\left((1+x),|\partial^2_y\partial^2_xu|^2\right)(t)+\left((1+x),|\partial^4_yu|^2\right)(t)\right]\notag\\
&=\|\partial^2_yu\|^2(t)-\left((1+x)u_{yy},(u^2)_{yyx}\right)(t).
\end{align}
Denote
$$
I=-\left((1+x)u_{yy},(u^2)_{yyx}\right)(t)
=\left(u_{yy},(u^2)_{yy}\right)(t)+\left((1+x)u_{xyy},(u^2)_{yy}\right)(t)
$$
$$
\equiv I_1+I_2,
$$
where
$$
I_1=2(u_{yy},uu_{yy}+u^2_y)(t)=I_{11}+I_{12}.
$$
By Proposition \ref{prop1},
$$
I_{11}=2(u,u^2_{yy})(t)\le 2\sup_{(x,y)\in\mathcal{D}}|u(x,y,t)|\,\|u_{yy}\|^2(t)
$$
$$
\le \left(1+\|u\|^2(t)+\|\nabla u\|^2(t)+\|\nabla u_y\|^2(t)\right)\left((1+x),u^2_{yy}\right)(t)
$$
and
$$
I_{12}=2(u_{yy},u^2_y)(t)\le2\|u_{yy}\|(t)\,\|u_y\|^2_{L^4(\mathcal{D})}(t)
$$
$$
\le 2C_{\mathcal{D}}\|u_{yy}\|(t)\|u_y\|(t)\|u_y\|_{H^1(\mathcal{D})}(t)\le C\|u_y\|^2_{H^1(\mathcal{D})}(t)\|u_y\|(t).
$$
Similarly,
$$
I_2\le 2(1+L)(u_{xyy},uu_{yy}+u^2_y)(t)\le 2(1+L)\|u_{xyy}\|(t)\left(\|uu_{yy}\|(t)+\|u^2_y\|(t)\right)
$$
$$
\le \delta\|u_{xyy}\|^2(t)+\frac{2(1+L)^2}{\delta}\left(\|uu_{yy}\|^2(t)+\|u_y^2\|^2(t)\right)\le  \delta\|u_{xyy}\|^2(t)
$$$$
+\frac{2(1+L)^2}{\delta}\left[\sup_{\mathcal{D}}|u(x,y,t)|^2\left((1+x),u^2_{yy}\right)(t)
+2C_{\mathcal{D}}\|u_y\|^2(t)\|u_y\|^2_{H^1(\mathcal{D})}(t)\right].
$$
Estimates of $I_{11},\ I_{12}$ and $I_2$ then imply
$$
I\le \delta \|u_{xyy}\|^2(t)
$$
$$
+\frac{C(L)}{\delta}\Bigl[1+\|u\|^2(t)+\|\nabla u\|^2(t)+\|u_{xy}\|^2(t)\Bigr]\left((1+x),u^2_{yy}\right)(t)
$$$$
+\frac{C(L)}{\delta}\|u_y\|^2(t)\|u_y\|^2_{H^1(\mathcal{D})}(t).
$$
Inserting $I$ into \eqref{3.10}, and taking $\delta>0$ and $\eps>0$ sufficiently small, we obtain
\begin{align}\label{3.11}
\frac{d}{dt}&\left((1+x),u_{yy}^2\right)(t)+\frac12\int_{-B}^Bu^2_{xyy}(0,y,t)\,dy+\|\nabla u_{yy}\|^2(t)\notag\\
&+\eps\Bigl(\|\partial^2_x\partial^2_yu\|^2(t)+\|\partial^4_yu\|^2(t)\Bigr)\notag\\
&\le C(L)\|u_y\|^2(t)\Bigl(\|u_y\|^2(t)+\|\nabla u_y\|^2(t)\Bigr)\\
&+C(L)\Bigl[1+\|u\|^2(t)+\|\nabla u\|^2(t)+\|\nabla u_y\|^2(t)\Bigr]\left((1+x),u^2_{yy}\right)(t).\notag
\end{align}
Making use of \eqref{3.9} and the Gronwall lemma, we infer
$$
\|u_{yy}\|^2(t)\le\left((1+x),u^2_{yy}\right)(t)\le C(L)I_0.
$$
Returning to \eqref{3.11}, we conclude that
\begin{align}\label{3.12}
\|\partial^2_yu_{\eps}\|^2(t)
&+\int_0^t\|\nabla (\partial^2_yu_{\eps})\|^2(\tau)\,d\tau+\int_0^t\int_{-B}^B(\partial^3_{xyy}u_{\eps})^2(0,y,\tau)\,dy\,d\tau\notag\\
&+\eps\int_0^t\left(\|\partial^2_y\partial^2_xu_{\eps}\|^2(\tau)+\|\partial^4_yu_{\eps}\|^2(\tau)\right)\,d\tau\notag\\
&\le C(L)\left((1+x),(u_0^2+u_{0y}^2+u_{0yy}^2)\right)\le C(L)I_0
\end{align}
with $C(L)$ independent on $\eps>0,\ B>0.$

\subsection{Estimate V}\label{5-th estimate}
Write the inner product
$$
2\left((1+x)\partial_tu_{\eps},\partial_t(A^{\eps}u_{\eps})\right)(t)=0
$$
as
\begin{align}\label{3.13}
\frac{d}{dt}
&\left((1+x),u_t^2\right)(t)+(1-2\eps)\int_{-B}^Bu_{xt}^2(0,y,t)\,dy+3\|u_{xt}\|^2(t)\notag\\
&+\|u_{yt}\|^2(t)+2\eps\Bigl[\left((1+x),u_{xxt}^2\right)(t)+\left((1+x),u_{yyt}^2\right)(t)\Bigr]\notag\\
&=\|u_t\|^2+2\left((1+x)uu_t,u_{xt}\right)(t)+2(u,u_t^2)(t).
\end{align}
We calculate
\begin{align*}
I_1
&=2\left((1+x)uu_t,u_{xt}\right)(t)\\
&\le 2(1+L)^{\frac12}\|u_{xt}(t)\,\sup_{\mathcal{D}}|u(x,y,t)|\,\|(1+x)^{\frac12}u_t\|(t)\\
&\le \delta\|u_{xt}\|^2(t)+\left(\frac{1+L}{\delta}\right)\Bigl[\|u\|^2_{H^1(\mathcal{D})}(t)+\|u_{xy}\|^2(t)\Bigr]\left((1+x),u_t^2\right)(t).
\end{align*}
Analogously,
\begin{align*}
I_2
&=2(u,u_t^2)(t)\\
&\le 2\Bigl[1+\|u\|^2(t)+\|\nabla u\|^2(t)+\|u_{xy}\|^2(t)\Bigr]\left((1+x),u_t^2\right)(t).
\end{align*}
Taking $\delta>0,\ \eps>0$ sufficiently small, we transform \eqref{3.13}  into the inequality
\begin{align}\label{3.14}
\frac{d}{dt}
&\left((1+x),u_t^2\right)(t)+\frac12\int_{-B}^Bu_{xt}^2(0,y,t)\,dy+\|\nabla u_t\|^2(t)\notag\\
&+\eps\Bigl[\|\partial^2_xu_t\|^2(t)+\|\partial^2_yu_t\|^2(t)\Bigr]\notag\\
&\le C(L)\Bigl[1+\|u\|^2(t)+\|\nabla u\|^2(t)+\|u_{xy}\|^2(t)\Bigr]\left((1+x),u_t^2\right)(t).
\end{align}
By the Gronwall lemma,
$$
\left((1+x),u_t^2\right)(t)\le C(L)I_0.
$$
Therefore, \eqref{3.14} becomes
\begin{align}\label{3.15}
&\left((1+x),u_{\eps t}^2\right)(t)+\int_0^t\int_{-B}^B(\partial^2_{x\tau}u_{\eps})^2(0,y,t)\,dy\,d\tau
+\int_0^t\|\nabla \partial_{\tau}u_{\eps}\|^2(\tau)\,d\tau\notag\\
&+\eps\int_0^t\Bigl[\|\partial^2_x\partial_{\tau}u_{\eps}\|^2(\tau)+\|\partial^2_y\partial_{\tau}u_{\eps}\|^2(\tau)\Bigr]\,d\tau
\le CI_0,
\end{align}
where constant $C$ depends on $L>0,$ but does not depend on $B,\ \eps>0.$

\subsection{Estimate VI}\label{6 estimate}
From the inner product
$$2\left((1+x)A^{\eps}u_{\eps},u_{\eps}\right)(t)=0$$
we get
\begin{align}\label{3.16}
(1-2\eps)
&\int_{-B}^Bu^2_x(0,y,t)\,dy+3\|u_x\|^2(t)+\|u_y\|^2(t)\notag\\
&+2\eps \left((1+x),[u_{xx}^2+u_{yy}^2]\right)(t)\notag\\
&=\|u\|^2(t)+\frac23 \int_{\mathcal{D}}u^3\,dx\,dy-2\left((1+x)u_t,u\right)(t).
\end{align}
Acting as in Section \ref{2-nd estimate}, we find for all $\delta>0$
$$
I=\frac23\int_{\mathcal{D}}u^3\,dx\,dy\le\delta\|\nabla u\|^2(t)+\frac{C}{\delta}\|u\|^4(t),
$$
whence, taking $\delta>0,$ $\eps>0$ sufficiently small and using \eqref{3.6}, \eqref{3.15}, we reduce \eqref{3.16} to the form
\begin{equation}\label{3.17}
\int_{-B}^B(\partial_xu_{\eps})^2(0,y,t)\,dy+\|\nabla u_{\eps}\|^2(t)\le CI_0,\ \forall t\in(0,T).
\end{equation}

Now, transform
$$
-2\left((1+x)\partial_y^2u_{\eps},A^{\eps}u_{\eps}\right)(t)=0
$$
into the equality
\begin{align}\label{3.18}
(1-2\eps)
&\int_{-B}^Bu^2_{xy}(0,y,t)\,dt+3\|u_{xy}\|^2(t)+\|\partial^2_y u\|^2(t)\notag\\
&-2\left((1+x)u_t,u_{yy}\right)(t)
+2\eps\left((1+x),\left[|\partial^2_yu_x|^2+|\partial^3_yu|^2\right]\right)(t)\notag\\
&=\|u_y\|^2(t)-\left((1+x)\partial^2_{yx} (u^2),u_y\right)(t).
\end{align}
Repeating computations of Estimate \ref{3-d estimate} and taking into account \eqref{3.17}, we find out
\begin{align*}
I_1&=-\left((1+x)(u^2)_{yx},u_y\right)(t)\\
   &\le \delta\|\nabla u_y\|^2(t)+\frac{C}{\delta}\|u\|^2(t)\left(\|\nabla u\|^4(t)+\|\nabla u\|^2(t)\right),
\end{align*}
that is
$$
I_1\le \delta\|\nabla u_y\|^2(t)+\frac{C}{\delta}I_0.
$$
For $\delta,\eps>0$ sufficiently small, \eqref{3.18} reads
\begin{equation*}\label{3.19}
\|\nabla u_{\eps y}\|^2(t)+
\int_{-B}^B(\partial^2_{xy}u_{\eps})^2(0,y,t)\,dy+\eps\left[\|\partial^2_xu_{\eps}\|^2+\|\partial^2_yu_{\eps}\|^2\right](t)\le CI_0.
\end{equation*}
The constant $C$ here depends on $L,\ I_0,$ but does not depend on $B>0,$ $\eps>0.$

\bigskip

We resume Estimates I--VI as follows.
\begin{align}\label{3.20}
&\|\nabla u_{\eps}\|^2(t)+\|\nabla u_{\eps y}\|^2(t)+\|u_{\eps t}\|^2(t)+\|u_{\eps x}(0,y,t)\|^2_{H^1_0(-B,B)}\notag\\
&+\int_0^T\left\{\|\nabla u_{\eps yy}\|^2(t)+\|u_{\eps t}\|^2_{H^1(\mathcal{D})}(t)+\|u_{\eps x}(0,y,t)\|^2_{H^2(-B,B)}\right\}\,dt\notag\\
&\le C(L,T)I_0
\end{align}
and
\begin{equation}\label{3.21}
\eps \left[ \|u_{\eps xx}\|^2(t)+\|u_{\eps yy}\|^2(t)\right]\le C(L,T)I_0,
\end{equation}
where the constant $C(L,T)$ depends on $L,T,$ but does not depend on $B,\eps.$

\subsection{Passage to the limit as $\eps\to 0$}\label{limit}
It follows from \eqref{3.21} that for all $\psi\in H^2_0(\mathcal{D})$
$$
\lim_{\eps\to 0}\eps (\partial^2_xu_{\eps},\psi_{xx})(t)=0\ \ \text{ and }\ \ \lim_{\eps\to 0}\eps (\partial^2_yu_{\eps},\psi_{yy})(t)=0.
$$
Since the constants in \eqref{3.20} and \eqref{3.21} do not depend on
$\eps>0,\ B>0,$ one may pass to the limit as $\eps\to 0$ in
$$
\int_0^T\int_{\mathcal{D}}\left[\partial_tu_{\eps}+(1+u_{\eps})\partial_xu_{\eps}+\Delta\partial_xu_{\eps}\right]\psi\,dx\,dy\,dt
$$
$$
+\eps\int_0^T\int_{\mathcal{D}}\left[\partial^2_xu_{\eps}\psi_{xx}+\partial^2_yu_{yy}\psi_{yy}\right]\,dx\,dy\,dt=0
$$
to obtain
\begin{equation}\label{3.22}
\int_0^T\int_{\mathcal{D}}\left[u_t+(1+u)u_x+\Delta u_x\right]\psi\,dx\,dy\,dt=0.
\end{equation}

Thus the following assertion is true.
\begin{lemma}\label{lema1}
Let all the conditions of Theorem \ref{theorem1} hold. Then there exists a weak solution $u(x,y,t)$ to \eqref{2.1}-\eqref{2.4} such that
\begin{align}\label{3.23}
&\|\nabla u\|^2(t)+\|\nabla u_y\|^2(t)+\|u_t\|^2(t)+\|\Delta u_x\|^2(t)+\|u_{x}(0,y,t)\|^2_{H^1_0(-B,B)}\notag\\
&+\!\int_0^T\!\left\{\|\nabla u_{yy}\|^2(t)+\|\Delta u_x\|^2(t)+\|u_t\|^2_{H^1_0(\mathcal{D})}(t)+\|u_{x}(0,y,t)\|^2_{H^2(-B,B)}\right\}dt\notag\\
&\le C(L,T)I_0,\ \ \ \text{for a.e.}\ t\in(0,T),
\end{align}
where $C(L,T),$ as earlier, depends on $L,\ T,\ \|u_0\|,$ but does not depend on $B>0.$
\end{lemma}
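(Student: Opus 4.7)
The plan is to pass to the limit $\eps\to 0$ in the regularized problem \eqref{3.1}-\eqref{3.4}, relying on the fact that the a priori estimates I--VI derived in Sections \ref{1-st estimate}-\ref{6 estimate} are uniform in $\eps>0$ and $B>0$. Local-in-time existence of a regular $u_\eps$ is given by \cite{lady2}, and the uniform bound \eqref{3.20} rules out blow-up on $(0,T)$, so $u_\eps$ extends to a global-in-time regular solution on $[0,T]$ for every $\eps>0$.

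Next I would extract weakly convergent subsequences. From \eqref{3.20} the family $\{u_\eps\}$ is bounded in $L^\infty(0,T;H^2(\mathcal{D}))$, in $L^2(0,T;H^3(\mathcal{D}))$ (via $\|\nabla u_{\eps yy}\|^2 + \|\Delta u_{\eps x}\|^2$), and $\{\partial_t u_\eps\}$ is bounded in $L^\infty(0,T;L^2(\mathcal{D}))\cap L^2(0,T;H^1_0(\mathcal{D}))$. By the Banach--Alaoglu theorem, along a subsequence (not relabeled) $u_\eps \rightharpoonup u$ weakly-$*$ and weakly in the corresponding spaces, with $\partial_t u_\eps \rightharpoonup u_t$ similarly. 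The Aubin--Lions lemma, applied with the compact embedding $H^3(\mathcal{D}) \hookrightarrow\hookrightarrow H^2(\mathcal{D})$ and the bound on $\partial_t u_\eps$ in $L^2(0,T;L^2(\mathcal{D}))$, yields strong convergence $u_\eps \to u$ in $L^2(0,T;H^2(\mathcal{D}))$; in particular $u_\eps \partial_x u_\eps \to u u_x$ in $L^1(\mathcal{Q}_T)$, so the nonlinear term passes to the limit.

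The regularizing terms $\eps(\partial_x^4 u_\eps + \partial_y^4 u_\eps)$, tested against $\psi \in H^2_0(\mathcal{D})$, vanish in the limit by \eqref{3.21} (the computation already recorded at the beginning of Section \ref{limit}). This yields the weak formulation \eqref{3.22} for $u$. The Dirichlet boundary conditions \eqref{2.2}-\eqref{2.3} transfer to $u$ by continuity of the trace operator from $H^2(\mathcal{D})$ to $L^2(\partial\mathcal{D})$ under weak convergence, and the initial datum \eqref{2.4} is recovered from $u \in C([0,T];L^2(\mathcal{D}))$, a consequence of $u \in L^\infty(0,T;H^2)$ with $u_t \in L^2(0,T;L^2)$. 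The estimate \eqref{3.23} then follows from \eqref{3.20} by lower semicontinuity of the $L^\infty_t$ and $L^2_t$ norms of the relevant functionals under the weak/weak-$*$ limits; the $B$-independence of the constant is inherited because no step in Sections \ref{1-st estimate}-\ref{6 estimate} introduced such a dependence.

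The principal obstacle is the passage to the limit in the quadratic term $u_\eps \partial_x u_\eps$: this depends crucially on the strong compactness provided by Aubin--Lions, which in turn requires both the spatial regularity estimate \eqref{3.12} and the temporal regularity \eqref{3.15}. A secondary technical point is the careful choice of the test-function class in \eqref{3.22}, which must be rich enough (e.g. $H^2_0(\mathcal{D})$ densely contained in $L^2(\mathcal{D})$) to characterize $u$ as a distributional solution of \eqref{2.1}, while still annihilating the $\eps$-regularizing terms after integration by parts.
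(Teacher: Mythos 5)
Your proposal follows the paper's route exactly: uniform-in-$\eps$ and $B$ estimates from Sections \ref{1-st estimate}--\ref{6 estimate}, weak/weak-$*$ compactness, vanishing of the regularizing terms via \eqref{3.21}, and lower semicontinuity to obtain \eqref{3.23}; the paper's own proof of this lemma is just the terse limit-passage paragraph of Section \ref{limit}, so you are merely supplying the standard details it omits. One small correction: \eqref{3.20} does not yet give bounds in $L^{\infty}(0,T;H^2(\mathcal{D}))$ or $L^2(0,T;H^3(\mathcal{D}))$ --- control of $u_{xx}$ and of the third-order $x$-derivatives is obtained only through the elliptic-regularity arguments of Lemmas \ref{lema2} and \ref{lema3}, which come \emph{after} this lemma and use it --- so your Aubin--Lions step should be run at the $H^1$ level, where the compact embedding $H^1(\mathcal{D})\hookrightarrow L^2(\mathcal{D})$ together with the $L^2(0,T;H^1)$ bound on $u_{\eps t}$ from \eqref{3.15} already yields the strong convergence needed to pass to the limit in $u_{\eps}\partial_x u_{\eps}$.
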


In order to complete the proof of the existence part of Theorem \ref{theorem1}, it suffices to show that
$$
u\in L^2\left(0,T; H^3(\mathcal{D})\right),\ \ \Delta u_x\in L^2\left(0,T; H^1(\mathcal{D})\right)
$$
and
$$
u\in L^{\infty}\left(0,T; H^2(\mathcal{D})\right),\ \ \Delta u_x\in L^{\infty}\left(0,T; L^2(\mathcal{D})\right).
$$
These inclusions will be proved in the following lemmas.

\begin{lemma}\label{lema2}
A weak solution from Lemma \ref{lema1} satisfies
\begin{equation}\label{3.24}
\int_0^T\left\{\|u\|^2_{H^3(\mathcal{D})}(t)+\|\Delta u_x\|^2_{H^1(\mathcal{D})}(t)\right\}dt\le CI_0,
\end{equation}
where $C$ does not depend on $B>0.$
\end{lemma}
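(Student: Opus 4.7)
The plan is to extract the remaining regularity from the bounds in Lemma~\ref{lema1} by combining (i) $H^2$ elliptic regularity for the Dirichlet problem satisfied by $v:=u_x$ on the rectangle $\mathcal{D}$, with (ii) algebraic rearrangement of the weak form \eqref{3.22}.

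From \eqref{3.23} I have pointwise-in-$t$ control of $\|\nabla u\|$, $\|\nabla u_y\|$, $\|u_t\|$, $\|\Delta u_x\|$, $\|u_x(0,\cdot,t)\|_{H^1_0(-B,B)}$ and $L^2(0,T)$-integrated control of $\|\nabla u_{yy}\|$, $\|u_t\|_{H^1_0(\mathcal{D})}$, $\|u_x(0,\cdot,t)\|_{H^2(-B,B)}$. Proposition~\ref{prop1} applied to $u$ then gives $\|u\|_{L^\infty(\mathcal{D})}(t) \in L^\infty(0,T)$ since $\|u\|_{H^1}$ and $\|u_{xy}\|\le\|\nabla u_y\|$ are already bounded. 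The pieces $u, \nabla u, u_{xy}, u_{yy}, u_{xyy}, u_{yyy}$ thus already contribute to \eqref{3.24}; the missing derivatives are $u_{xx}, u_{xxx}, u_{xxy}$.

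To obtain these, I treat $v(\cdot,\cdot,t):=u_x(\cdot,\cdot,t)$ as the solution of the Dirichlet problem
$$
\Delta v = \Delta u_x\ \ \text{in }\mathcal{D},\qquad v|_{x=L}=0,\ \ v|_{y=\pm B}=0,\ \ v|_{x=0}=u_x(0,y,t).
$$
The trace $u_x(0,y,t)$ vanishes at $y=\pm B$ because $u|_{y=\pm B}\equiv 0$ forces $u_x|_{y=\pm B}\equiv 0$, so the Dirichlet data is compatible at all four corners of $\mathcal{D}$. Standard $H^2$ elliptic regularity on the rectangle then yields
$$
\|u_x(\cdot,\cdot,t)\|_{H^2(\mathcal{D})}^2 \le C\bigl(\|\Delta u_x\|^2(t) + \|u_x(0,\cdot,t)\|_{H^{3/2}(-B,B)}^2\bigr).
$$
Integrating in $t$ and using $H^2(-B,B)\hookrightarrow H^{3/2}(-B,B)$ together with \eqref{3.23}, I obtain $u_x\in L^2(0,T;H^2(\mathcal{D}))$, hence $u_{xx},u_{xxx},u_{xxy},u_{xyy}\in L^2(0,T;L^2(\mathcal{D}))$, completing the $H^3$-part of \eqref{3.24}.

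For $\int_0^T\|\Delta u_x\|_{H^1(\mathcal{D})}^2\,dt$, \eqref{3.22} yields $\Delta u_x=-u_t-(1+u)u_x$ a.e., whence
$$
\nabla\Delta u_x=-\nabla u_t-u_x\,\nabla u-(1+u)\,\nabla u_x.
$$
Squaring and integrating in $t$, the first term is controlled by $\int_0^T\|\nabla u_t\|^2\,dt\le CI_0$; the two nonlinear terms are controlled using the pointwise $L^\infty(\mathcal{D})$-bound on $u$, the $L^2(0,T)$-bound on $\|u_x\|_{L^\infty(\mathcal{D})}$ (via Proposition~\ref{prop1} combined with the $u_x\in L^2(0,T;H^2)$ bound just proved, which supplies $u_{xxy}\in L^2(0,T;L^2)$), and the now-available $L^2(0,T;L^2)$-control of $u_{xx}$ and $u_{xy}$. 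The main obstacle I anticipate is the elliptic regularity step: $\mathcal{D}$ is only piecewise smooth, so one has to justify that the nonzero Dirichlet datum at $\{x=0\}$ meeting the corners $(0,\pm B)$ does not induce a corner singularity; this is saved by the compatibility $u_x(0,\pm B,t)=0$ inherited from the boundary condition on $\{y=\pm B\}$, which allows Grisvard-type $H^2$-estimates on the convex polygonal domain $\mathcal{D}$.
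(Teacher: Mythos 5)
Your overall strategy is the same as the paper's: read off $\Delta u_x=-u_t-(1+u)u_x\equiv f\in L^2(\mathcal{Q}_T)$ from \eqref{3.22} and \eqref{3.23}, regard $u_x$ as the solution of a Dirichlet problem on $\mathcal{D}$ with datum $\varphi(y,t)=u_x(0,y,t)\in L^2(0,T;H^2(-B,B))$ on the side $x=0$ and zero data elsewhere, and recover the missing derivatives $u_{xx},u_{xxx},u_{xxy}$ by elliptic regularity. Your explicit computation $\nabla\Delta u_x=-\nabla u_t-u_x\nabla u-(1+u)\nabla u_x$, with $\|u_x\|_{L^\infty(\mathcal{D})}\in L^2(0,T)$ obtained from Proposition~\ref{prop1} applied to $u_x$, is sound and in fact more detailed than the paper, which leaves that step implicit.

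The genuine gap is the uniformity in $B$, which is part of the statement of the lemma and is precisely what Section~\ref{strip} (the passage to the strip, Theorem~\ref{theorem2}) rests on. You invoke ``standard $H^2$ elliptic regularity'' with the boundary datum measured in $H^{3/2}(-B,B)$, but the constant in that nonhomogeneous estimate, in the embedding $H^2(-B,B)\hookrightarrow H^{3/2}(-B,B)$, and in the Grisvard-type corner analysis is a priori domain-dependent, and nothing in your argument shows it remains bounded as $B\to\infty$. The paper sidesteps this entirely: it subtracts the explicit lifting $\Phi(x,y,t)=(1-x/L)\varphi(y,t)$, whose $H^2(\mathcal{D})$-norm is controlled by $\|\varphi\|_{H^2(-B,B)}$ with a constant depending only on $L$, so that $v=u_x-\Phi$ solves $\Delta v=f+\Phi_{yy}$ with $v|_{\gamma}=0$ (here your corner observation $u_x(0,\pm B,t)=0$ is exactly what guarantees $v$ vanishes on $y=\pm B$); then the identity $\int_0^T\left(\Delta v-v,\Delta v-v\right)(t)\,dt=\int_0^T\|f-v+\Phi_{yy}\|^2(t)\,dt$, whose left side expands by integration by parts on the rectangle into $\int_0^T\{\|v_{xx}\|^2+\|v_{yy}\|^2+2\|v_{xy}\|^2+2\|\nabla v\|^2+\|v\|^2\}\,dt$, yields the full $H^2$-bound with an absolute constant. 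To close your proof you should replace the abstract trace-regularity step by this lifting-plus-energy-identity argument; the rest of your reasoning then goes through unchanged.
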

\begin{proof}
Taking into account \eqref{3.23} and Proposition \ref{prop1}, we write \eqref{3.22} in the form
\begin{align*}
&\Delta u_x=-u_t-(1+u)u_x\equiv f(x,y,t)\in L^2(\mathcal{Q}_T),\\
&u_x(0,y,t)\equiv \varphi(y,t)\in L^2\left(0,T;H^2(-B,B)\right),\\
&u_x(x,-B,t)=u_x(x,B,t)=u_x(L,y,t)=0.
\end{align*}
Denote $\Phi(x,y,t)=\varphi(y,t)(1-x/L)$ in $\mathcal{Q}_T.$ Obviously, $$\Phi\in L^2\left(0,T;H^2(\mathcal{D})\right).$$ Then the function
$$
v=u_x-\Phi(x,y,t)
$$
solves in $\mathcal{D}$ the elliptic problem
\begin{equation*}
\Delta v=f(x,y,t)+\Phi_{yy}(x,y,t),\ \ v|_{\gamma}=0,
\end{equation*}
which admits a unique solution $v\in L^2\left(0,T;H^2(\mathcal{D})\right)$, see \cite{lady}.
Consequently, $u_x\in L^2\left(0,T;H^2(\mathcal{D})\right).$ Therefore, \eqref{3.23} implies \eqref{3.24}.
It remains to show that the constant in \eqref{3.24} does not depend on $B>0.$
To prove this, consider the equality
$$
\int_0^T\left(\Delta v-v,\Delta v-v\right)(t)\,dt=\int_0^T\left[f-v+\Phi_{yy}\right]^2\,dt,
$$
that implies
$$
\int_0^T\left\{\|v_{xx}\|^2(t)+\|v_{yy}\|^2(t)+2\|v_{xy}\|^2(t)+2\|\nabla v\|^2(t)+\|v\|^2(t)\right\}dt\le CI_0.
$$
This gives
$$
\int_0^T\|v\|^2_{H^2(\mathcal{D})}(t)\,dt\le CI_0
$$
with $C$ independent on $B>0.$

Taking into account \eqref{3.22} and \eqref{3.23}, we complete the proof of Lemma \ref{lema2}.
\end{proof}

\begin{lemma}\label{lema3}
A weak solution given by Lemma \ref{lema1} satisfies
\begin{equation}\label{3.25}
\|u\|^2_{H^2(\mathcal{D})}(t)+\|\Delta u_x\|^2(t)\le CI_0
\end{equation}
with $C$ independent on $B>0.$
\end{lemma}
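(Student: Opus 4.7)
The piece $\|\Delta u_x\|^2(t)\le CI_0$ in \eqref{3.25} is already contained in Lemma~\ref{lema1}. Lemma~\ref{lema1} further provides $\|\nabla u\|^2(t)+\|\nabla u_y\|^2(t)\le CI_0$, which controls $u$, $u_x$, $u_y$, $u_{xy}$ and $u_{yy}$ pointwise in $t$ by $CI_0^{1/2}$. Hence the only $H^2$-component still missing is a pointwise bound on $\|u_{xx}\|^2(t)$, and the task reduces to producing $\|u_{xx}\|^2(t)\le CI_0$ uniformly in $t\in[0,T]$ with $C$ independent of $B$.

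The plan is to obtain this missing estimate by a Lions--Magenes time-interpolation argument. Lemma~\ref{lema2} gives $u\in L^2(0,T;H^3(\mathcal{D}))$ with $\|u\|^2_{L^2 H^3}\le CI_0$, while Lemma~\ref{lema1} gives $u_t\in L^2(0,T;H^1_0(\mathcal{D}))$ with $\|u_t\|^2_{L^2 H^1}\le CI_0$. Using the interpolated duality estimate $|(u,u_t)_{H^2}|\le\|u\|_{H^3}\|u_t\|_{H^1}$, the identity
$$
\|u(t)\|^2_{H^2(\mathcal{D})}=\|u(s)\|^2_{H^2(\mathcal{D})}+2\int_s^t (u,u_\tau)_{H^2}\,d\tau,
$$
combined with averaging in $s\in(0,T)$ and the inequality $\|u\|_{H^2}\le\|u\|_{H^3}$, yields
$$
\sup_{t\in[0,T]}\|u(t)\|^2_{H^2(\mathcal{D})}\le C\bigl(\|u\|^2_{L^2 H^3}+\|u_t\|^2_{L^2 H^1}\bigr)\le CI_0,
$$
which, coupled with the pointwise $\Delta u_x$ bound from Lemma~\ref{lema1}, establishes \eqref{3.25}. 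Equivalently, one may carry the argument out on the parabolic approximation $u_\eps$, whose higher smoothness legitimizes every integration by parts, and then pass to the limit by weak-$\ast$ lower semicontinuity.

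The principal technical obstacle is the required $B$-independence of the constants, both in the interpolated duality $|(u,u_t)_{H^2}|\le\|u\|_{H^3}\|u_t\|_{H^1}$ and in the embedding above. Since the Sobolev norms $\|\cdot\|_{H^k(\mathcal{D})}$ are sums of $L^2(\mathcal{D})$-norms of partial derivatives with no $B$-dependent weights, and since the interpolation of Hilbert-space dualities is of abstract nature, this uniformity is morally automatic. Rigorously, it can be verified by a $y$-Fourier expansion in the basis $\{\sin(n\pi(y+B)/(2B))\}_{n\ge 1}$ compatible with the Dirichlet condition at $y=\pm B$: writing $u=\sum_n c_n(x,t)\,\phi_n(y)$, the estimate reduces to a mode-by-mode scalar ODE argument of the form
$$
\lambda_n^2|c_n(t)|^2\le \lambda_n^2|c_n(0)|^2+\int_0^T\!\bigl(\lambda_n^3|c_n|^2+\lambda_n|\dot c_n|^2\bigr)\,d\tau
$$
with purely universal constants, whose summation in $n$ gives the claim with $C$ independent of $B$.
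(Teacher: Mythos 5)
Your reduction is fine: \eqref{3.23} already gives the pointwise-in-time bounds for $\|\Delta u_x\|^2(t)$ and for $u,u_x,u_y,u_{xy},u_{yy}$, so the whole content of the lemma is the missing $L^\infty(0,T;L^2)$ bound on $u_{xx}$. But the step you rest everything on, the ``interpolated duality estimate'' $|(u,u_t)_{H^2}|\le\|u\|_{H^3}\|u_t\|_{H^1}$, is asserted rather than proved, and for the standard Sobolev inner product on a bounded domain it is not true: to move one derivative from $u_t$ onto $u$ in a term such as $(\partial_x^2u,\partial_x^2u_t)$ you must integrate by parts in $x$, which produces boundary integrals over $\{x=0\}$ and $\{x=L\}$ involving $\partial_xu_t$; these do not vanish under the boundary conditions \eqref{2.2}--\eqref{2.3} (indeed $u_x(0,y,t)$ is one of the nontrivial traces the paper tracks throughout). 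The abstract Lions--Magenes trace theorem ($u\in L^2(0,T;H^3)$, $u_t\in L^2(0,T;H^1)\Rightarrow u\in C([0,T];[H^3,H^1]_{1/2})$) is true, but it requires the spectrally defined interpolation norm, and then you must prove both that $[H^3(\mathcal{D}),H^1(\mathcal{D})]_{1/2}=H^2(\mathcal{D})$ and that the equivalence constants are uniform in $B$ --- neither is ``morally automatic,'' and your proposed verification does not do it: expanding in the $y$-eigenbasis only diagonalizes $\partial_y^2$, and the mode-by-mode inequality you display contains no $x$-derivatives at all, so it cannot produce a bound on $\|u_{xx}\|$, which is exactly the quantity you need.

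The paper's proof avoids interpolation entirely and uses the equation itself at each fixed time $t$: it writes $\Delta u_x-u=-u_t-u-(1+u)u_x\equiv F$, notes that $F\in L^\infty(0,T;L^2(\mathcal{D}))$ and $u_x(0,y,t)\in L^\infty(0,T;H^1)$ by \eqref{3.23} and Proposition \ref{prop1}, lifts the lateral datum by $\Psi=(1-x/L)\phi(y,t)$, and applies the elementary energy estimate for the homogeneous Dirichlet problem $\Delta v-v=G$, whose constant is manifestly independent of the size of $\mathcal{D}$; this gives $u_x\in L^\infty(0,T;H^1)$, hence $u_{xx}\in L^\infty(0,T;L^2)$, with $B$-independent constants. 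The lesson is that the pointwise-in-time control of $u_{xx}$ comes from the structure of the equation (an elliptic problem for $u_x$ with $L^\infty_tL^2$ data), not from time-interpolation of the space-time bounds \eqref{3.24}; as written, your argument has a genuine gap at its central inequality and at the $B$-uniformity claim.
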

\begin{proof}
Similarly to the proof of Lemma \ref{lema2}, for almost all $t\in(0,T)$ we consider in $\mathcal{D}$ the elliptic problem
\begin{align*}
&\Delta u_x-u=-u_t-u-(1+u)u_x\equiv F(x,y,t),\\
&u_x(0,y,t)\equiv \phi (y,t),\\
&u_x(x,-B,t)=u_x(x,B,t)=u_x(L,y,t)=0.
\end{align*}
By \eqref{3.23} and Proposition \ref{prop1}, it holds
$$
F(x,y,t)\in L^{\infty}\left(0,T;L^2(\mathcal{D})\right)\ \text{ and }\ \phi (y,t)\in  L^{\infty}\left(0,T;H^1(\mathcal{D})\right).
$$
Taking $\Psi (x,y,t)=(1-x/L)\phi(y,t),$ we conclude that the function $v=u_x-\Psi$ solves the following elliptic problem:
$$
\Delta v-v=F(x,y,t)+\left(1-\frac{x}{L}\right)\partial_y\phi_y(y,t),\ \ v|_{\gamma}=0.
$$
This problem has a unique solution \cite{lady}
$$
v\in  L^{\infty}\left(0,T;H^1(\mathcal{D})\right).
$$
Taking into account \eqref{3.22} and \eqref{3.23}, we prove Lemma \ref{lema3}.
\end{proof}

In the regularization process we have
imposed suitable smoothness and consistency conditions upon $u_0$ defined actually by \eqref{3.14}. In the final steps
these excessive restrictions may be clearly weakened by usual compactness arguments.

Making use of Lemmas \ref{lema1}--\ref{lema3}, we complete the proof of the existence part of Theorem \ref{theorem1}.

\section{Uniqueness}\label{uniq}
Let $u_1$ and $u_2$ be two distinct solutions to \eqref{2.1}-\eqref{2.4}. Then $z=u_1-u_2$ solves the following IBVP:
\begin{align}
Az&\equiv z_t+\frac12(u_1^2-u_2^2)_x+\Delta z_x=0\ \ \text{in}\ \mathcal{Q}_T,\label{5.1}\\
&z(0,y,t)=z(L,y,t)=z_x(L,y,t)=z(x,\pm B,t)=0,\ \ t>0,\label{5.2}\\
&z(x,y,0)=0,\ \ (x,y)\in\mathcal{D}.\label{5.3}
\end{align}
From $$2\left(Az,(1+x)z\right)(t)=0$$ we infer
\begin{align}\label{5.4}
\frac{d}{dt}&\left((1+x),z^2\right)(t)+3\|z_x\|^2(t)+\|z_y\|^2(t)\notag\\
&+\int_{-B}^Bz_x^2(0,y,t)\,dy=-\int_{\mathcal{D}}\left[(u_1+u_2)z\right]_xz(1+x)\,dx\,dy.
\end{align}
Consider
$$
I=-\int_{\mathcal{D}}\left[(u_1+u_2)z\right]_xz(1+x)\,dx\,dy
$$
$$
=-\int_{\mathcal{D}}(u_1+u_2)z^2\,dx\,dy-\int_{\mathcal{D}}(u_1+u_2)(1+x)zz_x\,dx\,dy
$$
$$
\le \sup_{\mathcal{Q}_T}|u_1+u_2|\,\|z\|^2(t)+\|z_x\|^2(t)+(1+L)^2\sup_{\mathcal{Q}_T}|u_1+u_2|^2\|z\|^2(t).
$$
Due to \eqref{3.25} and Proposition \ref{prop1}
$$
\sup_{\mathcal{Q}_T}|u_1+u_2|^2(x,y,t)\le CI_0,
$$
whence,
$$
I\le \|z_x\|^2(t)+C\|z\|^2(t).
$$
This and \eqref{5.4} give
$$
\frac{d}{dt}\left((1+x),z^2\right)(t)\le C\left((1+x),z^2\right)(t).
$$
Gronwall's lemma and \eqref{5.3} then imply
$$
\|z\|^2(t)\equiv 0\text{ for all }t>0.
$$
The proof of uniqueness and, consequently, the proof of Theorem \ref{theorem1} is therefore completed.
\begin{flushright}$\Box$\end{flushright}

\section{Problem on a strip}\label{strip}
Taking into account that estimates of Theorem \ref{theorem1} do not depend on $B>0,$ one can expand a bounded domain
$\mathcal{D}$ to a strip
$$
\mathcal{S}_L=\left\{(x,y)\in\mathbb{R}^2:\ x\in(0,L),\ y\in\mathbb{R}\right\}.
$$
The initial boundary value problem to be considered reads
\begin{align}
A_{\alpha}&u\equiv u_t+(\alpha +u)u_x+u_{xxx}+u_{xyy}=0,\ \ \text{in}\ \mathcal{S}_L\times(0,T);
\label{55.1}
\\
&u(0,y,t)=u(L,y,t)=u_x(L,y,t)=0,\ \ y\in\mathbb{R},\ t>0;
\label{55.2}
\\
&u(x,y,0)=u_0(x,y),\ \ (x,y)\in\mathcal{S}_L.
\label{55.3}
\end{align}
The following result then holds.
\begin{thm}\label{theorem2}
Let $\alpha=1$ and $u_0$ be a given function such that
$$
\|u_0\|^2_{H^1(\mathcal{S}_L)}+\|\partial^2_yu_0\|^2_{L^2(\mathcal{S}_L)}+\|u_0u_{0x}+\Delta u_{0x}\|^2_{L^2(\mathcal{S}_L)}<\infty
$$
and
$$
u_0(0,y)=u_0(L,y)=u_{0x}(L,y)=0.
$$
Then for all finite positive numbers $L,T$ there exists a unique regular solution to
\eqref{55.1}-\eqref{55.3} such that
\begin{align*}
&u\in L^{\infty}(0,T;H^2(\mathcal{S}_L))\cap L^2(0,T;H^3(\mathcal{S}_L));\\
&\Delta u_x\in L^{\infty}(0,T;L^2(\mathcal{S}_L))\cap L^2(0,T;H^1(\mathcal{S}_L));\\
&u_t\in L^{\infty}(0,T;L^2(\mathcal{S}_L))\cap L^2(0,T;H^1(\mathcal{S}_L)).
\end{align*}
\end{thm}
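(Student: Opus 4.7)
The strategy is to construct the solution on $\mathcal{S}_L$ as the limit of the regular solutions on the bounded rectangles $\mathcal{D}_n=(0,L)\times(-B_n,B_n)$ as $B_n\to\infty$. This exhaustion works precisely because every a priori bound proved in Sections~\ref{1-st estimate}--\ref{6 estimate} and summarized in \eqref{3.20}--\eqref{3.21} is independent of $B$, so Theorem~\ref{theorem1} furnishes a uniformly controlled family on the $\mathcal{D}_n$.

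First, I would approximate the initial datum. Fix an even cutoff $\chi\in C_c^\infty(\mathbb{R})$ with $\chi\equiv 1$ on $[-1,1]$ and support in $[-2,2]$, set $\chi_n(y)=\chi(y/B_n)$, and define $u_0^{(n)}(x,y)=\chi_n(y)\,u_0(x,y)$. The conditions $u_0^{(n)}(0,y)=u_0^{(n)}(L,y)=u_{0x}^{(n)}(L,y)=u_0^{(n)}(x,\pm B_n)=0$ are immediate. Using the identity
\[
u_0^{(n)}u_{0x}^{(n)}+\Delta u_{0x}^{(n)}
=\chi_n(u_0u_{0x}+\Delta u_{0x})+(\chi_n^2-\chi_n)u_0u_{0x}+2\chi_n'\,u_{0xy}+\chi_n''\,u_{0x},
\]
together with $|\chi_n^{(k)}|\le CB_n^{-k}$ and the continuous embedding $H^2(\mathcal{S}_L)\subset L^\infty(\mathcal{S}_L)$, one checks that the quantity $I_0^{(n)}$ formed on $\mathcal{D}_n$ is bounded uniformly in $n$ by the norms of $u_0$ appearing in the hypothesis of Theorem~\ref{theorem2}. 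Any excess smoothness tacitly needed in Section~\ref{existence} is recovered by an auxiliary mollification, as in the remark following Lemma~\ref{lema3}.

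Theorem~\ref{theorem1} then supplies $u_n$ on $\mathcal{D}_n\times(0,T)$ satisfying \eqref{33.20} with constants independent of $B_n$. Extending $u_n$ by zero in $y$ and restricting to a fixed sub-strip $(0,L)\times(-R,R)$ with $R<B_n$, the uniform bounds persist. A standard diagonal extraction in $R$ yields a subsequence converging weakly-$*$ in $L^\infty(0,T;H^2)$ and weakly in $L^2(0,T;H^3)$ on every sub-strip, with consistent limits patching to a global function $u$ on $\mathcal{S}_L$ of the regularity claimed in Theorem~\ref{theorem2}. To pass to the limit in the nonlinear term $u_n\partial_x u_n$, I would invoke Aubin-Lions on each sub-strip: the bounds on $u_n$ in $L^2(0,T;H^3)$ and on $\partial_t u_n$ in $L^2(0,T;L^2)$ give strong convergence of $u_n$ in $L^2(0,T;H^1)$ locally in $y$, which is ample. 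Linear terms pass by weak convergence, identifying $u$ as a solution of \eqref{55.1}--\eqref{55.3}.

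Uniqueness transcribes Section~\ref{uniq}: for two solutions the difference $z$ satisfies the linearized equation, and testing against $(1+x)z$ gives
\[
\frac{d}{dt}\bigl((1+x),z^2\bigr)(t)\le C\bigl((1+x),z^2\bigr)(t),
\]
where $C$ depends only on $\sup_{\mathcal{S}_L\times(0,T)}|u_1+u_2|$; Gronwall and $z(0)=0$ close the argument. The $L^\infty$ bound replacing Proposition~\ref{prop1} follows from the embedding of $H^2(\mathcal{S}_L)$ into continuous functions vanishing at $|y|=\infty$, which justifies writing $u^2(x,y,t)=\int_{-\infty}^{y}\partial_s u^2(x,s,t)\,ds$ in place of the corresponding integral over $(-B,y)$. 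The genuine obstacle, in my view, is less the weak passage to the limit than the verification that the zero-extension combined with the diagonal argument still delivers $u(0)=u_0$ in the strong sense; this is handled by using $u_t\in L^2(0,T;L^2(\mathcal{S}_L))$ to obtain $u\in C([0,T];L^2(\mathcal{S}_L))$ and then matching initial values via the strong convergence $u_0^{(n)}\to u_0$ in $L^2(\mathcal{S}_L)$.
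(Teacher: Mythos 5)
Your proposal is correct and follows exactly the route the paper intends: the paper's entire ``proof'' of Theorem~\ref{theorem2} is the single remark that the estimates of Theorem~\ref{theorem1} do not depend on $B>0$, so the bounded rectangle may be expanded to the strip. Your exhaustion by rectangles, with the cutoff of the initial datum, the diagonal extraction, the local Aubin--Lions compactness for the nonlinearity, and the transcription of the uniqueness argument with the $H^2(\mathcal{S}_L)\hookrightarrow L^\infty$ bound replacing Proposition~\ref{prop1}, simply supplies the details the paper leaves implicit.
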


\section{Spectral analysis}
In this section we provide explicit conditions defining critical sizes of bounded rectangles and unbounded strip-like domains
of $\mathbb{R}^2$ in which stabilization of solutions may not hold, at least in a linear case. Our considerations are based on
spectral-type arguments and may be viewed as motivation for posterior nonlinear studies,
as well as
a 2D generalization of the critical lengths from \cite{rosier}.

We start with the linearization of \eqref{2.1}-\eqref{2.4} with $\alpha=1$:
\begin{align}
P&u\equiv u_t+u_x+u_{xxx}+u_{xyy}=0,\ \ \text{in}\ \mathcal{Q}_T;
\label{6.1}
\\
&u(x,-B,t)=u(x,B,t)=0,\ \ x\in(0,L),\ t>0;
\label{6.2}
\\
&u(0,y,t)=u(L,y,t)=u_x(L,y,t)=0,\ \ y\in(-B,B),\ t>0;
\label{6.3}
\\
&u(x,y,0)=u_0(x,y),\ \ (x,y)\in\mathcal{D}.
\label{6.4}
\end{align}
The related eigenvalue problem for the stationary part of $P$ becomes as follows:
find $L>0,\ B>0$ and a nontrivial $v:\mathcal{D}\to\mathbb{C}$ such that
\begin{align}
&v_x+v_{xxx}+v_{xyy}=\lambda v,\ \ \lambda\in\mathbb{C},\ \ \ \text{in}\ \mathcal{D};
\label{6.5}
\\
&v(x,-B)=v(x,B)=0,\ \ x\in(0,L);
\label{6.6}
\\
&v(0,y)=v(L,y)=v_x(0,y)=v_x(L,y)=0,\ y\in(-B,B).
\label{6.7}
\end{align}
To derive \eqref{6.5}-\eqref{6.7} see, for instance, \cite{rosier,rosier1}
for the straightway approach, and \cite{coron2,glass2} for the duality arguments.
Notice that ``extra'' boundary condition $v_x(0,y)=0$ makes the
operator in \eqref{6.5}-\eqref{6.7} to be not skew-adjoint.

Separating variables as $v(x,y)=p(x)q(y),$ we infer
\begin{equation}\label{6.8}
q^{\prime\prime}+\xi q=0,\ \ q(-B)=q(B)=0
\end{equation}
and
\begin{equation}\label{6.9}
(1-\xi)p^{\prime}+p^{\prime\prime\prime}=\lambda p,\ \ p(0)=p(L)=p^\prime(0)=p^{\prime}(L)=0.
\end{equation}
Therefore,
\begin{equation*}\label{6.99}
\xi =\left(\frac{\pi n}{2B}\right)^2, \ \ n\in\mathbb{N}
\end{equation*}
and
\begin{equation*}\label{6.999}
\lambda =i\beta,\ \ \beta\in\mathbb{R}.
\end{equation*}
To find $\beta$ from \eqref{6.9}, let $\mu_j=\mu_j(B,\beta),\ j=1,2,3$ be the roots of the characteristic equation
\begin{equation}\label{6.10}
(1-\xi)\mu+\mu ^3=i\beta.
\end{equation}
Then a function
$$
p(x)=\sum_{j=1}^{3}C_jx^{k_j}e^{\mu _jx}
$$
solves the ODE in \eqref{6.9}. Here $C_j$ are constants to be determined, and $k_j$ depends on multiplicity of $\mu_j.$ Observe that
double roots of \eqref{6.10} give only $p(x)\equiv 0,$ therefore $k_j=0,\ j=1,2,3.$ Boundary conditions in \eqref{6.9} yield
$$
\sum_{j=1}^{3}C_j=0,\ \ \sum_{j=1}^{3}\mu _jC_j=0,\ \ \sum_{j=1}^{3}C_je^{\mu _jL}=0,\ \ \sum_{j=1}^{3}\mu _jC_je^{\mu _jL}=0.
$$
Solving this system, we conclude that $C_j\ne 0$ if and only if
\begin{equation}\label{6.100}
e^{\mu_jL}=e^{\mu_iL},\ \ i\ne j.
\end{equation}
Next, taking $\mu_j=is_j,$ \eqref{6.10} becomes
\begin{equation}\label{6.11}
s^3-(1-\xi)s+\beta=0.
\end{equation}
This is the real coefficients equation which always possesses at least one real root; call it $s_1\in\mathbb{R}.$
Then \eqref{6.100} implies
$$
s_2=s_1+\frac{2\pi}{L}k,\ \ s_3=s_2+\frac{2\pi}{L}l=s_1+\frac{2\pi}{L}(2k+l),\ \ k,l\in\mathbb{N}
\footnote{In general, $k,l\in\mathbb{Z} \backslash \{0\},$ but one can numerate $s_j$ to be $s_1<s_2<s_3.$}.
$$
Furthermore, Vi\`ete's formulas for \eqref{6.11} read
\begin{align*}\label{viete}
&s_1+s_2+s_3=0,\notag\\
&s_1s_2+s_1s_3+s_2s_3=-(1-\xi),\\
&s_1s_2s_3=-\beta.\notag
\end{align*}
Simple computations give
\begin{equation*}\label{6.12}
s_1=-\frac{2\pi}{3L}(2k+l),\ \ L=\frac{2\pi}{\sqrt{3}}\sqrt{\frac{k^2+kl+l^2}{1-\xi}}
\end{equation*}
and finally
\begin{equation}\label{6.13}
\left(\frac{2\pi}{L\sqrt3}\sqrt{k^2+kl+l^2}\right)^2+\left(\frac{\pi n}{2B}\right)^2=1.
\end{equation}

\begin{remark}\label{remark1}
If the size of a rectangle $\mathcal{D}=(0,L)\times(-B,B)$ satisfies \eqref{6.13}, there are
solutions to \eqref{6.1}-\eqref{6.4} which do not decay; if both $L$ and $B$ are sufficiently small, one
can expect decay (in time) of the solutions. Once either $L$ or $B$ is small, we expect decay
of solutions to problems posed on domains unbounded in one of variables; namely in a strip and/or in a half-strip.
\end{remark}

\begin{remark}\label{remark2}
If $\alpha=0,$ \eqref{6.11} reads $s^3+\beta=0$ and \eqref{6.100} fails for all $L>0.$ This means that
a decay (in time) of solutions to \eqref{6.1}-\eqref{6.4} holds for all sizes of a rectangle $\mathcal{D}.$
\end{remark}

\section{Decay of small solutions}\label{decay}
In this section, we provide sufficient conditions in order to prove the exponential decay rate of small regular
solutions to problems \eqref{2.1}-\eqref{2.4} and \eqref{55.1}-\eqref{55.3}.

We start with a bounded rectangle $\mathcal{D}=(0,L)\times (-B,B).$

\begin{thm}\label{theorem3}Let $\alpha=1$ and $B,L$ be positive real numbers such that
\begin{equation}\label{mainrestrict}
\frac{24}{L^2}+\frac{2}{B^2}-1=2A^2>0.
\end{equation}
If $$\left((1+x),u_0^2\right)<\frac{9A^4}{16(8/L^2+2/B^2)}=\frac{(3A^2LB)^2}{32(4B^2+L^2)},
$$
then regular solutions of \eqref{2.1}-\eqref{2.4} satisfy the inequality
$$
\|u\|^2(t)\le \left((1+x),u^2\right)(t)\le e^{-\frac{A^2}{1+L}t}\left((1+x),u_0^2\right).
$$
\end{thm}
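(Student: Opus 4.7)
Plan for the proof of Theorem \ref{theorem3}.

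The starting point is the weighted $L^2$ identity from Estimate II in Subsection \ref{2-nd estimate}, taken in the limit $\varepsilon\to 0$. Multiplying \eqref{2.1} by $2(1+x)u$ and integrating over $\mathcal{D}$, any regular solution given by Theorem \ref{theorem1} satisfies
\[
\frac{dE}{dt}+\int_{-B}^B u_x^2(0,y,t)\,dy+3\|u_x\|^2+\|u_y\|^2=\|u\|^2+\tfrac{2}{3}\!\int_{\mathcal{D}}\!u^3\,dx\,dy,
\]
where $E(t):=\bigl((1+x),u^2\bigr)(t)$. I would drop the nonnegative boundary trace and aim to dominate the right-hand side by the coercive left-hand side, up to a multiple of $E$ that vanishes as $E\to 0$.

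For the cubic nonlinearity, Lemma \ref{lemma1} with $q=3$ gives $\tfrac{2}{3}\!\int u^3\le\tfrac{4}{3}\|\nabla u\|\|u\|^2$; combined with the trivial $\|\nabla u\|^2\le 3\|u_x\|^2+\|u_y\|^2$, a Young inequality with parameter $\theta\in(0,1)$ splits this as
\[
\tfrac{2}{3}\!\int u^3\le\theta\bigl(3\|u_x\|^2+\|u_y\|^2\bigr)+c_\theta\|u\|^4.
\]
The (non-sharp) Poincar\'e inequalities $\|u_x\|^2\ge 8/L^2\,\|u\|^2$ and $\|u_y\|^2\ge 2/B^2\,\|u\|^2$, which follow from $\pi^2>8$ and $\pi^2/4>2$, then yield
\[
3\|u_x\|^2+\|u_y\|^2\ge\Bigl(\tfrac{24}{L^2}+\tfrac{2}{B^2}\Bigr)\|u\|^2=(2A^2+1)\|u\|^2.
\]
Picking $\theta$ so that the residual coercivity $(1-\theta)(2A^2+1)-1$ equals $A^2$, and bounding $\|u\|^4\le E\cdot\|u\|^2$, one arrives at a differential inequality of the shape
\[
\frac{dE}{dt}+\bigl[A^2-\kappa E(t)\bigr]\|u\|^2\le 0,
\]
with $\kappa=\kappa(A,L,B)$ determined by the Young optimization; the smallness constant $9A^4/[16(8/L^2+2/B^2)]$ in the hypothesis is precisely the value below which $\kappa E_0<A^2$.

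The argument is closed by a standard continuation step. The hypothesis gives $A^2-\kappa E(0)>0$, so $dE/dt<0$ at $t=0$; by continuity $E(t)\le E(0)$ persists for all $t\ge 0$, and the bracket remains $\ge A^2$. Combining with the weight comparison $\|u\|^2\ge E/(1+L)$ gives
\[
\frac{dE}{dt}+\frac{A^2}{1+L}\,E(t)\le 0,
\]
and Gronwall's lemma, together with $\|u\|^2\le E$, finishes the proof. The main difficulty is the tight balancing of the two Poincar\'e constants $24/L^2+2/B^2$ (used for the linear coercivity on the left) and $8/L^2+2/B^2$ (implicit in the absorption of the cubic term on the right) so as to reach exactly the threshold announced: any looser choice of the Young parameter or of the $\|u\|^4\le E\cdot\|u\|^2$ step would only shrink the admissible set of initial data.
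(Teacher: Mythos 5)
Your overall strategy coincides with the paper's: the weighted identity $\left((1+x)A_1u,u\right)(t)=0$, the interpolation $\frac23\int u^3\le\frac43\|\nabla u\|\|u\|^2$ from Lemma \ref{lemma1}, the Poincar\'e constants $8/L^2$ and $2/B^2$ from Proposition \ref{prop2}, a smallness/continuation step, and Gronwall. However, the way you split the cubic term creates a genuine gap in the endgame. You apply Young as $\frac43\|\nabla u\|\|u\|^2\le\theta\|\nabla u\|^2+c_\theta\|u\|^4$ and then absorb $c_\theta\|u\|^4\le c_\theta E\|u\|^2$ against the coercive $\|u\|^2$ term, arriving at $\frac{dE}{dt}+\bigl[A^2-\kappa E\bigr]\|u\|^2\le0$. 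The bracket $A^2-\kappa E(t)$ is at most $A^2$ and is strictly below $A^2$ whenever $E>0$; your assertion that "the bracket remains $\ge A^2$" is false. What your inequality yields is decay at rate $(A^2-\kappa E(0))/(1+L)$, not the rate $A^2/(1+L)$ claimed in the theorem. Moreover your identification of the threshold is off: with $\theta$ chosen so that $(1-\theta)(2A^2+1)-1=A^2$ one gets $\theta=A^2/(24/L^2+2/B^2)$ and $\kappa=4/(9\theta)$, whence the condition $\kappa E_0<A^2$ reads $E_0<9A^4/\bigl(4(24/L^2+2/B^2)\bigr)$, which is not the stated constant $9A^4/\bigl(16(8/L^2+2/B^2)\bigr)$ (it is larger, so the hypothesis still implies your condition, but the claimed "precise" match does not hold).

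The paper avoids the rate erosion by splitting differently: Young is applied as $\frac43\|\nabla u\|\|u\|\cdot\|u\|\le\delta\|u\|^2+\frac{4}{9\delta}\|u\|^2\|\nabla u\|^2$, so the quartic contribution appears multiplied by $\|\nabla u\|^2$, and a reserved fraction $\epsilon\|u_x\|^2+\epsilon\|u_y\|^2$ of the gradient terms (rather than the $\|u\|^2$ coercivity) is used to dominate it. One then obtains
\begin{equation*}
\frac{d}{dt}\left((1+x),u^2\right)(t)+A^2\|u\|^2(t)+\Bigl[\epsilon-\tfrac{4}{9\delta}\left((1+x),u^2\right)(t)\Bigr]\|\nabla u\|^2(t)\le0,
\end{equation*}
with $\delta=A^2/2$ and $\epsilon=A^2/\bigl(2(8/L^2+2/B^2)\bigr)$; under the hypothesis $\left((1+x),u_0^2\right)<9\epsilon\delta/4$ the last bracket stays nonnegative, is simply dropped, and the \emph{full} coefficient $A^2$ survives, giving exactly $e^{-A^2t/(1+L)}$ and exactly the stated smallness constant $9\epsilon\delta/4=9A^4/\bigl(16(8/L^2+2/B^2)\bigr)$. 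To repair your argument you should keep the quartic remainder attached to $\|\nabla u\|^2$ rather than to $\|u\|^2$.
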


First, we need the following
\begin{proposition}\label{prop2}
Let $L>0,\ B>0$ be finite numbers and $w\in H^1_0(\mathcal{D}).$ Then the following inequalities hold:
\begin{equation}\label{7.1}
\int_0^L\int_{-B}^{B}w^2(x,y)\,dx\,dy\le \frac{B^2}{2}\int_0^L\int_{-B}^{B}w^2_y(x,y)\,dx\,dy
\end{equation}
and
\begin{equation}\label{7.2}
\int_0^L\int_{-B}^{B}w^2(x,y)\,dx\,dy\le \frac{L^2}{8}\int_0^L\int_{-B}^{B}w^2_x(x,y)\,dx\,dy
\end{equation}
\end{proposition}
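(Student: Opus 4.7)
My plan is to prove each inequality independently as a one-dimensional Poincaré-type estimate on slices, exploiting that $w\in H^1_0(\mathcal D)$ vanishes on all four sides of the rectangle. Both bounds follow from the same trick: represent $w$ as a line integral from each end of the transverse interval, apply Cauchy--Schwarz, and split the interval at its midpoint so that each half is only half as long.

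For \eqref{7.1}, I would fix $x\in(0,L)$. Using $w(x,-B)=w(x,B)=0$, write
\[
w(x,y)=\int_{-B}^{y}w_s(x,s)\,ds\quad\text{for }y\in[-B,0],\qquad w(x,y)=-\int_{y}^{B}w_s(x,s)\,ds\quad\text{for }y\in[0,B].
\]
Cauchy--Schwarz gives $w^2(x,y)\le(y+B)\int_{-B}^{0}w_s^2\,ds$ on the first half and $w^2(x,y)\le(B-y)\int_{0}^{B}w_s^2\,ds$ on the second. Integrating in $y$ on each half produces the weight $\int_{-B}^{0}(y+B)\,dy=\int_{0}^{B}(B-y)\,dy=B^2/2$; adding the two halves yields $\int_{-B}^{B}w^2\,dy\le\tfrac{B^2}{2}\int_{-B}^{B}w_s^2\,ds$, and integrating in $x$ finishes \eqref{7.1}.

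For \eqref{7.2} I would run the identical argument in the $x$-direction, splitting the interval $(0,L)$ at $x=L/2$ and using $w(0,y)=w(L,y)=0$. Each half now contributes $\int_{0}^{L/2}x\,dx=\int_{L/2}^{L}(L-x)\,dx=L^2/8$, which produces exactly the stated constant.

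There is no genuine obstacle; the only point worth flagging is that the midpoint split is \emph{essential} for obtaining the clean constants $B^2/2$ and $L^2/8$ stated in the proposition. A one-sided representation (integrating from $-B$ to $y$ over the whole interval, say) would give only $2B^2$ and $L^2/2$, which are too large to be useful in the decay estimate \eqref{mainrestrict} of Theorem~\ref{theorem3}.
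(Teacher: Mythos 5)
Your proof is correct and follows essentially the same route as the paper: represent $w$ by a line integral from the nearer endpoint of the transverse interval, apply Cauchy--Schwarz, and split at the midpoint so that each half contributes $\int_0^{L/2}x\,dx=L^2/8$ (respectively $\int_{-B}^{0}(y+B)\,dy=B^2/2$). Your remark that the midpoint split is what produces the sharp constants needed in Theorem~\ref{theorem3} is accurate and matches the paper's computation.
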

\begin{proof}
To prove \eqref{7.2}, consider $x\in(0,L/2).$ Then
$$
w(x,y)=\int_0^xu_{\xi}(\xi,y)\,d\xi\le x^{1/2}\left(\int_0^{L/2}w_x^2(x,y)\,dx\right)^{1/2}.
$$
Hence
$$
w^2(x,y)\le x\int_0^{L/2}w_x^2(x,y)\,dx
$$
and consequently
\begin{equation}\label{7.3}
\int_0^{L/2}\int_{-B}^{B}w^2(x,y)\,dx\,dy\le \frac{L^2}{8}\int_0^{L/2}\int_{-B}^{B}w^2_x(x,y)\,dx\,dy.
\end{equation}
Similarly,
\begin{equation*}
\int_{L/2}^L\int_{-B}^{B}w^2(x,y)\,dx\,dy\le \frac{L^2}{8}\int_{L/2}^L\int_{-B}^{B}w^2_x(x,y)\,dx\,dy.
\end{equation*}
Adding this to \eqref{7.3} gives \eqref{7.2}. Inequality \eqref{7.1} is obtained in the same manner.
Proposition \ref{prop2} is thereby proved.
\end{proof}

To prove Theorem \ref{theorem3}, consider the inner product
$$\left((1+x)A_1u,u\right)(t)=0$$ and write it as
\begin{align}\label{7.4}
\frac{d}{dt}\left((1+x),u^2\right)(t)
&+\int_{-B}^{B}u_x^2(0,y,t)\,dy+3\|u_x\|^2(t)\notag\\
&+\|u_y\|^2(t)-\|u\|^2(t)=\frac23(1,u^3)(t).
\end{align}
Making use of \eqref{2.5}, we compute
\begin{align*}
I_1
&=\frac23(1,u^3)(t)\le \frac23\left(2^{1/3}\|\nabla u\|^{1/3}(t)\|u\|^{2/3}(t)\right)^{3}\\
&\le \frac43 \|\nabla u\|(t)\|u\|^2(t)\le \delta \|u\|^2(t)+\frac{4}{9\delta} \|u\|^2(t)\|\nabla u\|^2(t)\\
&=\delta \|u\|^2(t)+\frac{4}{9\delta} \|u\|^2(t) \left(\|u_x\|^2(t)+\|u_y\|^2(t)\right)
\end{align*}
with an arbitrary $\delta>0,$ and in addition,
\begin{align*}
I_2&=3\|u_x\|^2(t)+\|u_y\|^2(t)\\
&=(3-\epsilon)\|u_x\|^2(t)+(1-\epsilon)\|u_y\|^2(t)+\epsilon\|u_x\|^2(t)+\epsilon\|u_y\|^2(t)
\end{align*}
with an arbitrary $\epsilon >0.$
By Proposition \ref{prop2}, \eqref{7.4} reduces to
\begin{align}\label{7.5}
\frac{d}{dt}
&\left((1+x),u^2\right)(t)
+\left[\frac{24}{L^2}+\frac{2}{B^2}-1-\delta -\epsilon \left(\frac{8}{L^2}+\frac{2}{B^2}\right)\right]\|u\|^2(t)\notag\\
&+\left[\epsilon-\frac{4}{9\delta}\|u\|^2(t)\right]\|u_x\|^2(t)+\left[\epsilon-\frac{4}{9\delta}\|u\|^2(t)\right]\|u_y\|^2(t)
\le 0.
\end{align}
Denote
$$2A^2=\frac{24}{L^2}+\frac{2}{B^2}-1>0$$
and take
$$
\delta=\frac{A^2}{2},\ \ \epsilon=\frac{A^2}{2\left(\frac{8}{L^2}+\frac{2}{B^2}\right)}.
$$
With this choice of $\eps$ and $\delta,$ \eqref{7.5} reads
\begin{align}\label{7.6}
\frac{d}{dt}\left((1+x),u^2\right)&(t)+A^2\|u\|^2(t)\notag\\
&+\left[\epsilon-\frac{4}{9\delta}\left((1+x),u^2\right)(t)\right]\|\nabla u\|^2(t)\le 0.
\end{align}
It is known (see, for instance, \cite{familark}), that if $\left((1+x),u_0^2\right)<9\epsilon\delta /4,$ then
$$
\left((1+x),u^2\right)(t)< \frac{9\epsilon\delta}{4}\ \text{ for all }\ t>0,
$$
and \eqref{7.6} becomes
$$
\frac{d}{dt}\left((1+x),u^2\right)(t)+ \frac{A^2}{1+L}\left((1+x),u^2\right)(t)\le 0
$$
which has a solution
$$
\|u\|^2(t)\le \left((1+x),u^2\right)(t)\le e^{-\frac{A^2}{1+L}t}\left((1+x),u_0^2\right).
$$
The proof of Theorem \ref{theorem3} is complete.
\begin{flushright}$\Box$\end{flushright}

In the case of a strip (see Section \ref{strip}), the existence result is given by Theorem \ref{theorem2}, and
for $\mathcal{S}_L=\{(x,y)\in \mathbb{R}^2:\ x\in(0,L),\ y\in\mathbb{R}\}$ the following assertion holds.

\begin{thm}\label{theorem4}
Let $\alpha=1,\ L>0$ be a finite number such that $$24/L^2-1=2A^2>0$$ and
$$
\left((1+x),u_0^2\right)<9\frac{(24-L^2)^2}{2^9L^2}.
$$
Then a regular solution to \eqref{55.1}-\eqref{55.3} satisfies
$$
\|u\|^2(t)\le \left((1+x),u^2\right)(t)\le e^{-\varrho t}\left((1+x),u_0^2\right),
$$
where $\varrho=\frac{24-L^2}{2L^2(1+L)}.$
\end{thm}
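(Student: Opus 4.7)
The plan is to mimic the proof of Theorem \ref{theorem3}, adapting the parameter choices to the loss of the $y$-Poincaré inequality in the strip. Starting from the identity $\left((1+x)A_1 u,u\right)(t)=0$, I would integrate by parts (using the boundary conditions on $\mathcal{S}_L$ together with the decay of $u$ as $|y|\to\infty$ implicit in the regularity class of Theorem \ref{theorem2}) to obtain exactly the analogue of \eqref{7.4}, with the $y$-integration now extended to all of $\mathbb{R}$. The cubic term is then bounded, as before via the Gagliardo--Nirenberg inequality \eqref{2.5} (still valid on $\mathcal{S}_L$ with $u|_{x=0,L}=0$), giving
$$
\tfrac{2}{3}(1,u^3)(t)\le \delta\|u\|^2(t)+\tfrac{4}{9\delta}\|u\|^2(t)\bigl(\|u_x\|^2(t)+\|u_y\|^2(t)\bigr).
$$

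The key difference with Theorem \ref{theorem3} is that \eqref{7.1} is unavailable on $\mathcal{S}_L$. I would therefore split off only from $\|u_x\|^2$: write $3\|u_x\|^2=(3-\epsilon)\|u_x\|^2+\epsilon\|u_x\|^2$ and leave $\|u_y\|^2$ untouched as a nonnegative reservoir. Applying only \eqref{7.2} to the $(3-\epsilon)\|u_x\|^2$ piece yields
$$
\tfrac{d}{dt}\bigl((1+x),u^2\bigr)(t)+\left[\tfrac{24}{L^2}-1-\delta-\tfrac{8\epsilon}{L^2}\right]\|u\|^2(t)+\left[\epsilon-\tfrac{4}{9\delta}\|u\|^2(t)\right]\|u_x\|^2(t)+\left[1-\tfrac{4}{9\delta}\|u\|^2(t)\right]\|u_y\|^2(t)\le 0.
$$
With $2A^2=24/L^2-1>0$, the natural calibration is $\delta=A^2/2$ and $\epsilon=A^2L^2/16$, which makes the bracket multiplying $\|u\|^2$ equal to $A^2$ and, since $L^2<24$ forces $\epsilon<1$, makes $\epsilon$ the binding constant in the dissipative reservoir terms. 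A direct computation then identifies $9\epsilon\delta/4$ with $9(24-L^2)^2/(2^9 L^2)$, matching the hypothesis on $u_0$.

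The one point that needs care is the propagation of the smallness assumption: I need that $\bigl((1+x),u^2\bigr)(t)<9\epsilon\delta/4$ holds for \emph{all} $t>0$, not only initially, so that the bracket coefficients of $\|u_x\|^2$ and $\|u_y\|^2$ stay nonnegative. This is the standard continuity/barrier argument cited from \cite{familark}: were the inequality to be saturated at some first time $t_\ast$, the differential inequality would already force strict decrease of $\bigl((1+x),u^2\bigr)$ on $[0,t_\ast]$, a contradiction. Once this is established, the inequality reduces to $\frac{d}{dt}\bigl((1+x),u^2\bigr)+A^2\|u\|^2\le 0$, and the pointwise bound $(1+x)\le 1+L$ gives $\|u\|^2\ge \bigl((1+x),u^2\bigr)/(1+L)$, so Gronwall yields the stated exponential decay with rate $\varrho=A^2/(1+L)=(24-L^2)/(2L^2(1+L))$. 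The main obstacle is purely conceptual, namely recognizing that one should not try to reproduce the $y$-Poincaré split but instead retain $\|u_y\|^2$ intact, since on the strip it is the $x$-Poincaré inequality alone that must supply the linear coercivity; everything else is bookkeeping.
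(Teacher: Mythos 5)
Your proposal is correct and follows exactly the route the paper intends for Theorem \ref{theorem4}, namely the proof of Theorem \ref{theorem3} with the $y$-Poincar\'e inequality \eqref{7.1} discarded and only \eqref{7.2} used for coercivity; your choices $\delta=A^2/2$, $\epsilon=A^2L^2/16$ reproduce the threshold $9\epsilon\delta/4=9(24-L^2)^2/(2^9L^2)$ and the rate $\varrho=A^2/(1+L)$ exactly as stated. The only cosmetic difference is that you keep the full $\|u_y\|^2$ as the reservoir rather than the $\epsilon\|u_y\|^2$ piece left over from the paper's split, which changes nothing in the final constants.
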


\medskip
It is clear that restrictions on $B$ and $L$ appear due to the presence of the term $u_x,$ i.e., $\alpha=1$ in \eqref{2.1}.
If $\alpha=0$, then there are no restrictions on $B>0,\ L>0$ and the following results are true:

\begin{thm}\label{theorem5}
Let $B,\ L$ be any finite positive numbers and $\alpha=0.$ If
$$
\left((1+x),u_0^2\right)<\frac{9}{32}\frac{(12B^2+L^2)^2}{L^2B^2(4B^2+L^2)},
$$
then regular solutions to \eqref{2.1}-\eqref{2.4} satisfy the inequality
$$
\|u\|^2(t)\le \left((1+x),u^2\right)(t)\le e^{-\sigma t}\left((1+x),u_0^2\right)
$$
with $\sigma=\frac{12B^2+L^2}{B^2L^2(1+L)}.$
\end{thm}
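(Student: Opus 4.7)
The plan is to mimic the proof of Theorem \ref{theorem3} step by step, exploiting the observation that the absence of the linear transport term (since $\alpha=0$) removes the single obstruction to having decay for arbitrary $B,L>0$. I would start from the multiplier identity $2((1+x)A_0 u,u)(t)=0$ and integrate by parts using the boundary conditions \eqref{2.2}--\eqref{2.3}. The only term whose contribution differs from \eqref{7.4} is the one coming from the linear $u_x$ in \eqref{2.1}: that term produced $-\|u\|^2(t)$ on the left-hand side in the $\alpha=1$ case, and it disappears entirely when $\alpha=0$. The remaining integrations by parts yield $\frac{d}{dt}((1+x),u^2)(t) + 3\|u_x\|^2(t) + \|u_y\|^2(t) + \int_{-B}^{B} u_x^2(0,y,t)\,dy = \frac{2}{3}(1,u^3)(t)$, and the boundary term at $x=0$ is dropped as it is nonnegative.

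Next I would estimate the cubic term exactly as in the proof of Theorem \ref{theorem3} using Lemma \ref{lemma1}, obtaining the bound $\delta\|u\|^2(t) + \frac{4}{9\delta}\|u\|^2(t)(\|u_x\|^2(t)+\|u_y\|^2(t))$ for arbitrary $\delta>0$, and then apply Proposition \ref{prop2} after the split $3\|u_x\|^2+\|u_y\|^2 = (3-\eps)\|u_x\|^2+(1-\eps)\|u_y\|^2 + \eps(\|u_x\|^2+\|u_y\|^2)$. This produces an inequality in complete analogy with \eqref{7.5}, except that the bracket multiplying $\|u\|^2(t)$ is now $\frac{24}{L^2}+\frac{2}{B^2} - \delta - \eps(\frac{8}{L^2}+\frac{2}{B^2})$; the critical $-1$ present in the derivation of \eqref{7.5} is gone.

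Setting $2A^2 := \frac{24}{L^2}+\frac{2}{B^2}$, which is automatically positive for all $B,L>0$, and choosing $\delta = A^2/2$ together with $\eps = A^2/[2(8/L^2+2/B^2)]$, I would reduce the inequality to the exact analogue of \eqref{7.6}. A direct computation shows that $9\eps\delta/4$ equals the stated bound $\frac{9}{32}(12B^2+L^2)^2/[L^2B^2(4B^2+L^2)]$, so the smallness hypothesis on $((1+x),u_0^2)$ forces $((1+x),u^2)(t)$ to remain below this threshold for all $t>0$, via the standard continuity argument invoked in Theorem \ref{theorem3} and cited from \cite{familark}; this in turn keeps the coefficient of $\|\nabla u\|^2(t)$ nonnegative. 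Discarding that term and using $(1+x)\le 1+L$ yields $\frac{d}{dt}((1+x),u^2)(t) + \sigma((1+x),u^2)(t)\le 0$ with $\sigma = A^2/(1+L)$, and Gronwall's lemma then closes the argument.

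The only step requiring real care is the algebraic verification that $9\eps\delta/4$ and $\sigma=A^2/(1+L)$ match the stated quantities $\frac{9}{32}(12B^2+L^2)^2/[L^2B^2(4B^2+L^2)]$ and $(12B^2+L^2)/[B^2L^2(1+L)]$ respectively; both reduce to substituting $A^2=(12B^2+L^2)/(L^2B^2)$ and $8/L^2+2/B^2 = 2(4B^2+L^2)/(L^2B^2)$. Conceptually, the proof confirms exactly what Remark \ref{remark2} predicted from the linear spectral analysis: when $\alpha=0$ the characteristic equation \eqref{6.11} degenerates so that \eqref{6.100} fails identically, hence no critical size appears and stabilization is unconditional in $B,L$, provided only that the initial energy is small.
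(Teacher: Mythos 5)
Your proposal is correct and is essentially the proof the paper intends: Theorem \ref{theorem5} is stated without a written proof precisely because it follows from the argument of Theorem \ref{theorem3} with the term $-\|u\|^2(t)$ absent from \eqref{7.4}, and your algebra checks out ($A^2=(12B^2+L^2)/(L^2B^2)$, $9\epsilon\delta/4=\frac{9}{32}(12B^2+L^2)^2/[L^2B^2(4B^2+L^2)]$, $\sigma=A^2/(1+L)$). Nothing further is needed.
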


\begin{thm}\label{theorem6}
Let $L$ be any finite positive number and $\alpha=0.$ If
$$
\left((1+x),u_0^2\right)<\frac{81}{8L^2},
$$
then regular solutions to \eqref{55.1}-\eqref{55.3} satisfy the inequality
$$
\|u\|^2(t)\le \left((1+x),u^2\right)(t)\le e^{-\nu t}\left((1+x),u_0^2\right)
$$
with $\nu=\frac{12}{L^2(1+L)}.$
\end{thm}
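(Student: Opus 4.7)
The plan is to mimic the proof of Theorem \ref{theorem4}, but with the linear transport contribution $-\|u\|^2(t)$ removed (since $\alpha=0$), and with only the horizontal Poincaré inequality \eqref{7.2} available (since $\mathcal{S}_L$ is unbounded in $y$).

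\textbf{Step 1: The weighted energy identity.} First I would form the inner product $\left((1+x)A_0 u,u\right)(t)=0$ over $\mathcal{S}_L\times\{t\}$ and integrate by parts, using that a regular solution guaranteed by Theorem \ref{theorem2} decays as $|y|\to\infty$ rapidly enough to kill the boundary terms at infinity. Since the term $(1+x)u_x$ is absent, this yields
\begin{equation*}
\frac{d}{dt}\left((1+x),u^2\right)(t)+\int_{\mathbb{R}} u_x^2(0,y,t)\,dy+3\|u_x\|^2(t)+\|u_y\|^2(t)=\frac23(1,u^3)(t),
\end{equation*}
which is \eqref{7.4} without the unfavourable $-\|u\|^2(t)$ term.

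\textbf{Step 2: Nonlinear estimate and Poincaré splitting.} Next I would bound the cubic nonlinearity exactly as in the proof of Theorem \ref{theorem3}, via Lemma \ref{lemma1}:
\begin{equation*}
\frac23(1,u^3)(t)\le \delta\|u\|^2(t)+\frac{4}{9\delta}\|u\|^2(t)\|\nabla u\|^2(t),\qquad \delta>0.
\end{equation*}
Proposition \ref{prop2}'s inequality \eqref{7.2}, $\|u\|^2\le(L^2/8)\|u_x\|^2$, still holds on the strip (its proof uses only $x$-boundedness). Splitting $3\|u_x\|^2=(3-\epsilon)\|u_x\|^2+\epsilon\|u_x\|^2$ and applying \eqref{7.2} to the first piece produces
\begin{align*}
\frac{d}{dt}\left((1+x),u^2\right)(t)
&+\left[\frac{8(3-\epsilon)}{L^2}-\delta\right]\|u\|^2(t)\\
&+\left[\epsilon-\frac{4}{9\delta}\|u\|^2(t)\right]\|u_x\|^2(t)+\left[1-\frac{4}{9\delta}\|u\|^2(t)\right]\|u_y\|^2(t)\le 0.
\end{align*}

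\textbf{Step 3: Choosing the parameters and closing via bootstrap.} Set $2A^2=24/L^2$ and choose $\delta=A^2/2=6/L^2$, $\epsilon=3/4$. With these choices the coefficient of $\|u\|^2(t)$ equals exactly $A^2=12/L^2$, and $9\epsilon\delta/4=81/(8L^2)$. The smallness hypothesis $\left((1+x),u_0^2\right)<81/(8L^2)$ triggers the standard continuity argument (cited in the proof of Theorem \ref{theorem3} via \cite{familark}): since $\|u\|^2(t)\le((1+x),u^2)(t)$ stays below $9\epsilon\delta/4$ as long as the bracketed gradient coefficients remain non-negative, the set of such $t$ is both open and closed in $[0,\infty)$, hence all of $[0,\infty)$. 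Dropping the two non-negative gradient terms and using $(1+x)\le 1+L$ to get $\|u\|^2(t)\ge \frac{1}{1+L}((1+x),u^2)(t)$ reduces the inequality to
\begin{equation*}
\frac{d}{dt}\left((1+x),u^2\right)(t)+\frac{A^2}{1+L}\left((1+x),u^2\right)(t)\le 0,
\end{equation*}
whose Gronwall integration yields the announced decay with rate $\nu=A^2/(1+L)=12/(L^2(1+L))$.

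\textbf{Main obstacle.} There is no serious analytic obstacle: the only delicate issue is justifying the vanishing of all the boundary contributions at $y=\pm\infty$ during integration by parts and the applicability of Lemma \ref{lemma1} and Proposition \ref{prop2} on the strip. Both follow from the regularity provided by Theorem \ref{theorem2}, since the $L^\infty(0,T;H^2(\mathcal{S}_L))$ control together with $u|_{x=0,L}=0$ gives enough decay in $y$ to legitimize the formal computations, and inequality \eqref{7.2} is proved by a one-dimensional Hardy/Poincaré argument in $x$ that is insensitive to the $y$-range.
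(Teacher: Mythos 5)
Your proposal is correct and follows exactly the route the paper intends: Theorem \ref{theorem6} is stated without a separate proof precisely because it is the $\alpha=0$, strip version of the argument given for Theorem \ref{theorem3}, i.e.\ the weighted identity from $\left((1+x)A_0u,u\right)(t)=0$ with no $-\|u\|^2(t)$ term, the cubic estimate via \eqref{2.5}, and only the $x$-Poincar\'e inequality \eqref{7.2}. Your parameter choices $\delta=6/L^2$, $\epsilon=3/4$ reproduce the stated threshold $9\epsilon\delta/4=81/(8L^2)$ and the decay rate $\nu=12/(L^2(1+L))$ exactly, so nothing is missing.
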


\section*{Conclusions}
As a conclusion, we provide a comparison between conditions \eqref{6.13} and \eqref{mainrestrict}, i.e.,
a comparison between size restrictions for linear and nonlinear models.
Taking $k=l=m=1,$ \eqref{6.13} becomes
\begin{equation}\label{88.1}
\frac{4\pi^2}{L^2}+\frac{\pi^2}{4B^2}=1,
\end{equation}
and recall that \eqref{mainrestrict} reads
\begin{equation}\label{8.1}
\frac{24}{L^2}+\frac{2}{B^2}>1.
\end{equation}
Suppose $L^{\ast}>0$ and $B^{\ast}>0$ solve \eqref{88.1} and denote
$$
\mathcal{D}^{\ast}=(0,L^{\ast})\times(-B^{\ast},B^{\ast})\subset\mathbb{R}^2.
$$
Call this set the {\it minimal critical rectangle}.
If $L<L^{\ast}$ and $B<B^{\ast}$
satisfy \eqref{8.1}, then $\mathcal{D}\subset\mathcal{D}^{\ast}.$
This means that if $\mathcal{D}$ is located inside
the {\it minimal critical rectangle}, then a sufficiently small solution
to nonlinear problem \eqref{2.1}-\eqref{2.4} necessarily stabilizes.
In particular, stabilizability holds for all rectangles $\mathcal{D}$ either with the width $L<2\pi,$ or
with the height $2B<\pi.$
Furthermore, a small solution for problems posed on a sufficiently narrow strip $\mathcal{S}_L$ stabilizes as well.
Observe also that \eqref{88.1} fits well with the stabilization result from \cite{larkintronco}.


\medskip

\end{document}